\newtheorem{theorem}{Theorem}[section]
\newtheorem{lemma}[theorem]{Lemma}
\newtheorem{corollary}[theorem]{Corollary}
\theoremstyle{definition}
\newtheorem{definition}[theorem]{Definition}
\newtheorem{remark}[theorem]{Remark}
\newcommand{\bfphi}{\boldsymbol{\varphi}}
\newcommand{\Ov}[1]{\overline{#1}}
\newcommand{\DC}{C^\infty_c}
\newcommand{\vr}{\varrho}
\newcommand{\vt}{\vartheta}
\newcommand{\vc}[1]{{\bf #1}}
\newcommand{\vu}{\vc{u}}
\newcommand{\vm}{\vc{m}}
\newcommand{\Div}{{\rm div}_x}
\newcommand{\Grad}{\nabla_x}
\newcommand{\Del}{\Delta_x}
\newcommand{\dx}{\ {\rm d} {x}}
\newcommand{\dt}{\ {\rm d} t }
\newcommand{\dxdt}{\dx \dt}
\newcommand{\intO}[1]{\int_{\Omega} #1\dx}
\begin{document}
	
	\title{On the density of ``wild'' initial data for the compressible Euler system}

	\author{Eduard Feireisl \and Christian Klingenberg \and Simon Markfelder}
	
	\maketitle
	
\bigskip

\centerline{Institute of Mathematics of the Academy of Sciences of the Czech Republic}

\centerline{\v Zitn\' a 25, CZ-115 67 Praha 1, Czech Republic}

\centerline{and}

\centerline{Institute of Mathematics, TU Berlin}

\centerline{Strasse des 17.Juni, Berlin}

\bigskip

\centerline{Department of Mathematics, W\"urzburg University}

\centerline{Emil-Fischer-Str. 40, 97074 W\"urzburg, Germany}

\bigskip

	\begin{abstract}
		We consider a class of ``wild'' initial data to the compressible Euler system that
		give rise to infinitely many admissible weak solutions via the method of convex integration.
		We identify the closure of this class in the natural $L^1-$topology and show that its complement is rather
		large, specifically it is an open dense set. 
	\end{abstract}

\noindent \textbf{Keywords:} compressible Euler system, convex integration, wild data, weak--strong uniqueness

\noindent \textbf{MSC (2010):} 76N10, 35L65, 35Q31
	
	\section{Introduction}
	\label{E}
	
	The motion of a fluid in gas dynamics is described by means of the standard variables: the fluid density $\vr = \vr(t,x)$,
	the macroscopic velocity $\vu = \vu(t,x)$, and the absolute temperature $\vt = \vt(t,x)$ satisfying the \emph{Euler system}:

	\begin{equation} \label{E1}
	\begin{split}
	\partial_t \vr + \Div (\vr \vu) &= 0,\\
	\partial_t (\vr \vu) + \Div (\vr \vu \otimes \vu) + \Grad p(\vr, \vt) &= 0,\\
	\partial_t \left( \frac{1}{2} \vr |\vu|^2 + \vr e(\vr, \vt) \right)
	+ \Div \left[ \left( \frac{1}{2} \vr |\vu|^2 + \vr e(\vr, \vt) \right) \vu \right]
	+ \Div (p(\vr, \vt) \vu )&=0.
	\end{split}
	\end{equation}

	\noindent
	We suppose  the fluid is confined to a smooth bounded domain $\Omega \subset R^N$, $N=2,3$, with
	the impermeable boundary:

	\begin{equation} \label{E2}
	\vu \cdot \vc{n}|_{\partial \Omega} = 0.
	\end{equation}

	The state of the system at the reference time $t = 0$ is given by the initial conditions:

	\begin{equation} \label{E3}
	\vr(0, \cdot) = \vr_0, \ \vu(0, \cdot) = \vu_0, \ \vt(0, \cdot) = \vt_0.
	\end{equation}

	\subsection{Thermodynamics}
	
	The system \eqref{E1} contains two thermodynamics functions: the presure $p = p(\vr, \vt)$ and the internal energy
	$e = e(\vr, \vt)$. In accordance with the Second law, we postulate the existence of the entropy $s = s(\vr, \vt)$ related to $p$ and $e$
	via Gibbs' equation
	\[
	\vt Ds = De + D \left( \frac{1}{\vr} \right) p.
	\]
	If the functions $\vr$, $\vu$, and $\vt$ are continuously differentiable, the relations \eqref{E1} give rise to the entropy balance:
	\begin{equation} \label{E4}
	\partial_t (\vr s(\vr,\vt)) + \Div (\vr s(\vr, \vt) \vu) = 0.
	\end{equation}
	In the context of weak solutions considered in this paper, it is customary to relax \eqref{E4} to \emph{inequality}
	\begin{equation} \label{E5}
	\partial_t (\vr s(\vr,\vt)) + \Div (\vr s(\vr, \vt) \vu) \geq 0.
	\end{equation}
	
	The Euler system \eqref{E1} can be written in the conservative variables:
	the density $\vr$, the momentum $\vc{m} = \vr \vu$, and the energy $E = \frac{1}{2} \vr |\vu|^2 + \vr e(\vr, \vt)$:
	\begin{equation} \label{E6}
	\begin{split}
	\partial_t \vr + \Div \vc{m} &= 0,\\
	\partial_t \vc{m} + \Div \left( \frac{\vc{m} \otimes \vc{m} }{\vr} \right) + \Grad p(\vr, \vc{m}, E) &= 0,\\
	\partial_t E
	+ \Div \left( E \frac{\vm}{\vr} \right)
	+ \Div \left (p(\vr, \vm, E) \frac{\vm}{\vr} \right)&=0.
	\end{split}
	\end{equation}
	Although we divide by $\vr$ when passing to the formulation in conservative variables, we can still treat vacuum by requiring the momentum to be zero in vacuum zones, see Section \ref{DMV}.
	
	The forthcoming analysis leans essentially on the \emph{hypothesis of thermodynamics stability}. The latter can be formulated either
	in the standard variables:
	\begin{equation} \label{E7}
	\frac{\partial p(\vr, \vt)}{\partial \vr} > 0, \ \frac{\partial e(\vr, \vt)}{\partial \vt} > 0;
	\end{equation}
	or in the conservative variables:
	\begin{equation} \label{E8}
	\begin{split}
	(\vr, \vc{m}, E) \mapsto &S(\vr, \vc{m}, E) \equiv \vr s (\vr, \vc{m}, E) \\
	&\ \mbox{is a strictly concave function of}\ (\vr, \vm, E).
	\end{split}
	\end{equation}
	
	\subsection{Smooth and weak solutions}
	
	Smooth solutions are at least continuously differentiable on the set $[0,T) \times \Ov{\Omega}$,
	$\vr > 0$, $\vt > 0$, and satisfy
	(\ref{E1}--\ref{E3}) pointwise. Smooth solutions are known to exist locally on a maximal interval $[0,T_{\rm max})$
	as long as the initial data are smooth enough and satisfy the corresponding compatibility conditions on $\partial \Omega$, see last part of Section 4 for details.
	Moreover, it is well known that $T_{\rm max} < \infty$ for a ``generic'' class of data.
	
	The weak solutions satisfy \eqref{E1}, \eqref{E2} in the sense of distributions, the conservative variables are weakly continuous in time so that
	the initial conditions are well defined. The weak solutions are called \emph{admissible} if the entropy inequality
	\eqref{E5} holds in the weak sense. The existence of global in time weak (admissible) solutions for given initial data is an open problem. Recently, however, the method of convex integration developed in the context of the incompressible Euler system
	by De Lellis and Sz\' ekelyhidi \cite{DelSze10} has been adapted to identify a class of initial data for which the problem (\ref{E1}--\ref{E3}) admits \emph{infinitely many} admissible weak solutions defined on a given time interval $(0,T)$, \cite{FKKM17}. Similar results have been obtained also for the associated Riemann problem in \cite{BKKMM18}.
	
	The initial data for which the problem admits local smooth solution will be termed \emph{regular}, the data giving rise
	to infinitely many admissible weak solutions are called \emph{wild}. Our goal is to identify the class of regular data that can be obtained as limits of wild data. Note that for the \emph{incompressible} Euler system, the wild data (velocities) are dense in the Lebesgue space $L^2(\Omega; R^N)$, see Sz\' ekelyhidi and Wiedemann \cite{SzeWie12}.
	
	In the present paper, we restrict ourselves to the class of wild solutions resulting from a rather general splitting method which goes back to \cite{Feireisl16}. To the best of our knowledge, all convex integration results for the Euler system \eqref{E6} in the literature use such a splitting method.
	
	A vector field $\vm$ can be decomposed by means of Helmholtz decomposition, i. e. it can be written as the sum of a solenoidal vector field $\vc{H}[\vm]$ and the gradient of a scalar field.
	
	More precisely, the Helmholtz projection operator $\vc{H}$ is defined as 
	\begin{equation*}
	\begin{split}
	&\vc{m} = \vc{H}[ \vc{m} ] + \vc{H}^\perp [ \vc{m} ], \\&\mbox{where}\ \vc{H}^\perp = \Grad \Phi,\
	\Del \Phi = \Div \vc{m},\ (\Grad \Phi - \vc{m}) \cdot \vc{n}|_{\partial \Omega} = 0,\ \intO{ \Phi } = 0.
	\end{split}
	\end{equation*}
	
	Accordingly, for $\vc{m} = \vc{v} + \Grad \Phi$, $\vc{v} = \vc{H} [\vc{m}]$, the system \eqref{E6} can be written in the form
	\begin{equation} \label{NE6}
	\begin{split}
	\partial_t \vr + \Del \Phi &= 0,\\
	\partial_t \vc{v} + \vc{H} \left[ \Div \left( \frac{\vc{m} \otimes \vc{m} }{\vr} - \frac{1}{N} \frac{|\vc{m}|^2}{\vr} \mathbb{I} \right) \right]  &= 0,\\
	\partial_t (\Grad \Phi) + \vc{H}^\perp  \left[ \Div \left( \frac{\vc{m} \otimes \vc{m} }{\vr}  - \frac{1}{N} \frac{|\vc{m}|^2}{\vr} \mathbb{I}\right)\right] +\qquad\qquad\qquad &
	\\
	+ \Grad \left( \frac{1}{N} \frac{|\vc{m}|^2}{\vr} \right) + \Grad p(\vr, \vc{m}, E)  &=0,\\
	\partial_t E
	+ \Div \left( E \frac{\vm}{\vr} \right)
	+ \Div \left (p(\vr, \vm, E) \frac{\vm}{\vr} \right)&=0.
	\end{split}
	\end{equation}
	
	To the best of our knowledge, there are only two convex integration an\-satzes for obtaining wild solutions to the Euler system \eqref{E6} in the literature. Let us begin with the first one, which is used in \cite{FKKM17}. This ansatz is based on replacing equations $\eqref{NE6}_2$, $\eqref{NE6}_3$ by the system
	\begin{equation} \label{NE7}
	\begin{split}
	\partial_t \vc{v} +\Div \left( \frac{\vc{m} \otimes \vc{m} }{\vr} - \frac{1}{N} \frac{|\vc{m}|^2}{\vr} \mathbb{I} \right)   &= 0, \ \Big( \Div \vc{v} = 0 \Big), \\
	\partial_t (\Grad \Phi)
	+ \Grad \left( \frac{1}{N} \frac{|\vc{m}|^2}{\vr} \right) + \Grad p(\vr, \vc{m}, E)  &=0,\ \Big( \partial_t \vr + \Del \Phi = 0 \Big).
	\end{split}
	\end{equation} 
	Equation $\eqref{NE7}_1$ represents the volume preserving part of the motion, while
	$\eqref{NE7}_2$ may be seen as a wave equation governing the propagation of acoustic waves.
	Note that the fields $\vc{v}$ and $\Grad \Phi$ as well as the fluxes in \eqref{NE7} are orthogonal with respect to the $L^2$ scalar product, specifically, 
	\[
	\intO{ \vc{v} \cdot \Grad \Phi } = 0,\ \left[ \frac{\vc{m} \otimes \vc{m} }{\vr} - \frac{1}{N} \frac{|\vc{m}|^2}{\vr} \mathbb{I} \right]
	: \left[ \left( \frac{1}{N} \frac{|\vc{m}|^2}{\vr} + p(\vr, \vc{m}, E) \right) \mathbb{I} \right]  = 0,
	\]
	where the latter is true since it is a product of a traceless matrix and a multiple of the identity matrix.
	
	Although $\eqref{NE7}_1$ is apparently overdetermined, it admits (infinitely many) weak solutions for any fixed $\vr$ and $\Phi$, cf.  Appendix \ref{A}.
	
	\begin{definition} \label{ND1}
		
		A weak solution $[\vr, \vm, E]$ of the Euler system \eqref{E6} is called \emph{wild solution} if $[\vr, \vm, E]$ satisfy \eqref{NE7}, with
		$\vc{v} = \vc{H}[\vc{m}]$, $\Grad \Phi = \vc{H}^\perp [\vm]$. The corresponding initial data $[\vr_0, \vm_0, E_0]$ are called \emph{wild initial data}.
		
	\end{definition}
	
	As indicated above, Theorem \ref{TS1} yields solutions which are wild in the sense of Definition \ref{ND1}. This guarantees that the set of wild solutions, as well as the set of wild initial data are non-empty. However Definition \ref{ND1} is more general, which means that there might be wild solutions (and hence wild initial data) in the sense of Definition \ref{ND1} that cannot be obtained by Theorem \ref{TS1}.
	
	Note that a technique similar to \eqref{NE7} has been used also for the simplified isentropic Euler system 
	by Chiodaroli \cite{Chiodaroli14}. 
	
	The second convex integration ansatz, that is available in the literature, is based on the analysis of the corresponding Riemann 
	problem, see \cite{BKKMM18}, \cite{ChiKre14}, \cite{KliMar18} among others. We will first focus on the ansatz \eqref{NE7} and afterwards extend our results to the 
	wild solutions obtained via the Riemann problem in Section \ref{DD}.

	\subsection{Main result}
	
	We are ready to formulate our main result.
	To avoid the situation when the temperature approaches absolute zero, we restrict ourselves to the phase space
	\begin{equation*}
	\begin{split}
	&\texttt{L}^1_{+,s_0} (\Omega; R^{N + 2}) \\
	&= \left\{ [\vr, \vm, E] \in L^1(\Omega; R^{N+2}) \ \Big|
	\ \vr \geq 0, \ E \geq 0, \ s(\vr, \vm, E) \geq s_0 > - \infty \right\}.
	\end{split}
	\end{equation*}
	Note that this is not very restrictive as any admissible weak solution satisfies the minimum principle
	\[
	s(\vr, \vm, E)(t,x) \geq {\rm ess}\inf_{y \in \Omega} s(\vr_0(y), \vm_0(y), E_0(y)) \ \quad\mbox{a.a. in}\ (0,T) \times \Omega,
	\]
	cf. \cite[Section 2.1.1]{BreFei17}.
	
	We say that a sequence of data $\left\{ \vr_{0,n}, \vu_{0,n}, \vt_{0,n} \right\}_{n=1}^\infty$, or, equivalently, \\$\left\{ \vr_{0,n}, \vm_{0,n}, E_{0,n} \right\}_{n=1}^\infty$, \emph{(W)-converges} to
	$[\vr_0, \vm_0, E_0]$,
	\[
	[ \vr_{0,n}, \vm_{0,n}, E_{0,n} ] \stackrel{(W)}{\rightarrow} [\vr_0, \vm_0, E_0],
	\]
	if
	\begin{itemize}
		\item
		\[
		\vr_{0,n} > 0,\quad\ s(\vr_{0,n}, \vm_{0,n}, E_{0,n}) \geq  s_0 > -\infty;
		\]
		\item
		\begin{equation} \label{E9}
		[ \vr_{0,n}, \vm_{0,n}, E_{0,n} ] \to [\vr_0, \vm_0, E_0] \quad\ \mbox{in}\ L^1(\Omega; R^{N+2});
		\end{equation}
		\item
		the initial data $[\vr_{0,n},
		\vm_{0,n}, E_{0,n}]$ give rise to a sequence of admissible weak solutions $[\vr_n, \vm_n, E_n]$ satisfying
		\begin{equation} \label{E10}
		\begin{split}
		\int_0^T \intO{ \left( \frac{\vm_n \otimes \vm_n} {\vr_n} - \frac{1}{N} \frac{ |\vm_n|^2}{\vr_n} \mathbb{I} \right)
			: \Grad^2 \varphi } \dt \to 0 \quad\ \mbox{as}\ n \to \infty\ \\\mbox{for any}\ \varphi \in \DC((0,T) \times \Omega).
		\end{split}
		\end{equation}

	\end{itemize}
	
	Note that, in view of \eqref{NE7},
	\[
	\vc{H}^\perp \left[ \Div \left( \frac{\vm_n \otimes \vm_n} {\vr_n} - \frac{1}{N} \frac{ |\vm_n|^2}{\vr_n} \mathbb{I} \right) \right] = 0
	\]
	whenever $[\vr_n, \vm_n, E_n]$ are wild solutions in the sense of Definition \ref{ND1}. In particular, 
	\eqref{E10} is trivially satisfied for any sequence of wild initial data in the sense of Definition \ref{ND1}.
	
	Finally, note that the sequence of solutions $\left\{ \vr_{n}, \vm_{n}, E_{n} \right\}_{n=1}^\infty$ need not {\it a priori} converge strongly, and, consequently, \eqref{E10} does not impose any similar restriction on a possible limit solution emanating from $[\vr_0, \vm_0, E_0]$.
	
	\begin{definition} \label{ND2}
		
		We say that a trio $[\vr_0, \vm_0, E_0]$ is \emph{reachable} if there exists a sequence of initial data
		$\left\{ \vr_{0,n}, \vm_{0,n}, E_{0,n} \right\}_{n=1}^\infty$ such that
		\[
		[ \vr_{0,n}, \vm_{0,n}, E_{0,n} ] \stackrel{(W)}{\rightarrow}  [\vr_0, \vm_0, E_0].
		\]
		
	\end{definition}
	
	Our goal is to show that the set of reachable data is ``small'' in the sense that its complement is an open dense set in $\texttt{L}^1_{+,s_0} (\Omega; R^{N+2})$. For the sake of simplicity,
	we focus on the equations of gas dynamics, where the pressure is given by Boyle--Mariotte law
	\[
	p(\vr, \vt) = \vr \vt, \quad\ \ e(\vr, \vt) = c_v \vt, \ c_v > 0.
	\]
	Equivalently, in the conservative variables,
	\begin{equation} \label{E11}
	p = \frac{1}{c_v} \left( E - \frac{1}{2} \frac{|\vc{m}|^2}{\vr} \right),\ \quad\vt = \frac{1}{c_v \vr } \left( E - \frac{1}{2} \frac{|\vc{m}|^2}{\vr} \right) > 0.
	\end{equation}
	Here the kinetic energy is defined as
	\[
	\frac{|\vc{m}|^2}{\vr} =  \left\{ \begin{array}{ll} 0 \ &\mbox{if}\ \vm = 0 \\
	\infty \ &\mbox{if}\ \vr = 0 \\
	\frac{|\vc{m}|^2}{\vr} \ &\mbox{otherwise.}
	\end{array} \right.
	\]
	The entropy reads
	\[
	S (\vr, \vm, E) = \vr c_v \log \left( \frac{
		E - \frac{1}{2} \frac{|\vc{m}|^2}{\vr}  }{ c_v \vr^{1 + \frac{1}{c_v}}}.
	\right)
	\]

	Here is our main result.

	\begin{theorem} \label{TE1}
		Let $s_0 \in R$ be given.
		Let $\Omega \subset R^N$, $N=2,3$ be a bounded smooth domain. \\
		Then the complement of the set of reachable data is an open dense set in
		$\texttt{L}^1_{+,s_0} (\Omega; R^{N + 2})$.
	\end{theorem}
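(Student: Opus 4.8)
The plan is to establish the two halves of the statement separately, rewriting them as properties of the set $\mathcal C$ of reachable data: the complement is open iff $\mathcal C$ is closed in $L^1$, and it is dense iff $\mathcal C$ has empty interior. For the first, I would show $\mathcal C$ is closed by a diagonalisation. If $z_k \to z$ in $L^1$ with each $z_k$ reachable, write $z_k$ as the $L^1$-limit of a $(W)$-sequence $\{z_{k,n}\}_n$ with admissible weak solutions $[\vr_{k,n},\vm_{k,n},E_{k,n}]$ obeying \eqref{E10}. Conservation of total energy yields the uniform bound $\int_0^T\intO{|\vm_{k,n}|^2/\vr_{k,n}}\dt \le 2T\,\intO{E_{0,k,n}}$, so the traceless stresses $R_{k,n}=\frac{\vm_{k,n}\otimes\vm_{k,n}}{\vr_{k,n}}-\frac{1}{N}\frac{|\vm_{k,n}|^2}{\vr_{k,n}}\mathbb I$ are bounded in $L^1((0,T)\times\Omega)$. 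Choosing $n_k$ so that $\|z_{k,n_k}-z_k\|_{L^1}<1/k$ and $|\int_0^T\intO{R_{k,n_k}:\Grad^2\varphi_j}\dt|<1/k$ for the first $k$ members of a fixed family $\{\varphi_j\}$ that is dense in $\DC((0,T)\times\Omega)$ for uniform convergence of second derivatives with supports in a common compact set, the uniform $L^1$ bound on $R_{k,n_k}$ upgrades this smallness from $\{\varphi_j\}$ to all of $\DC$; hence the diagonal sequence $(W)$-converges to $z$ and $\mathcal C$ is closed.

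The density of the complement is the core of the argument, and it rests on the observation that a regular datum is almost never reachable. Let $z_0=[\vr_0,\vm_0,E_0]$ be regular, with smooth solution $Z=[\vr,\vm,E]$ on $[0,T_{\max})$, and suppose $z_0$ is reachable, say $z_{0,n}\to z_0$ with admissible weak solutions $[\vr_n,\vm_n,E_n]$ satisfying \eqref{E10}. Fix $\tau_0<T_{\max}$. The relative energy (weak--strong uniqueness) inequality of \cite{BreFei17}, applied with the strong solution $Z$ and the admissible weak solutions $[\vr_n,\vm_n,E_n]$, gives a Gronwall bound of the form $\mathcal E_{\rm rel}(\vr_n,\vm_n,E_n\,|\,Z)(\tau)\le \mathcal E_{\rm rel}(z_{0,n}\,|\,z_0)\,e^{C(Z,\tau_0)\tau}$ on $[0,\tau_0]$. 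Since $z_{0,n}\to z_0$ in $L^1$ inside $\texttt{L}^1_{+,s_0}$, the initial relative energy tends to $0$, forcing $[\vr_n,\vm_n,E_n]\to Z$ strongly enough that $R_n\to R:=\frac{\vm\otimes\vm}{\vr}-\frac1N\frac{|\vm|^2}{\vr}\mathbb I$ in $L^1((0,\tau_0)\times\Omega)$. Passing to the limit in \eqref{E10} for $\varphi$ supported in $(0,\tau_0)\times\Omega$ then yields $\int_0^T\intO{R:\Grad^2\varphi}\dt=0$, i.e. $\Div\Div R=0$ near $t=0$.

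It follows that a regular datum with $\Div\Div R_0\ne0$ at $t=0$ --- a condition on the datum alone, which by continuity of $Z$ persists on a short interval --- cannot be reachable. It remains to show such data are $L^1$-dense in $\texttt{L}^1_{+,s_0}$. I would approximate an arbitrary $z_0$ by smooth, strictly positive data $[\vr_0^\delta,\vu_0^\delta,\vt_0^\delta]$ satisfying $\vu_0^\delta\cdot\vc{n}|_{\partial\Omega}=0$ and the higher-order boundary compatibility conditions, keeping $s\ge s_0$ by first raising the internal energy by a small constant and then mollifying; these data are regular by the local existence theory recalled in Section 4. Should such an approximation accidentally satisfy $\Div\Div R_0^\delta=0$, a small smooth compactly supported perturbation of $\vm_0^\delta$ destroys this identity while preserving positivity, the boundary conditions and $L^1$-smallness, completing the density proof.

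The step I expect to be the main obstacle is controlling the initial relative energy $\mathcal E_{\rm rel}(z_{0,n}\,|\,z_0)(0)$ by the mere $L^1$-distance of the data: the relative energy contains nonlinear and logarithmic contributions and degenerates near vacuum, so its vanishing in the limit genuinely requires the defining constraints of $\texttt{L}^1_{+,s_0}$ --- $\vr\ge0$, $E\ge0$ and the one-sided bound $s\ge s_0$ --- together with a uniform integrability argument. A secondary difficulty is arranging the smooth approximating data to respect simultaneously the strict positivity and entropy constraints and the boundary compatibility conditions required by the local existence theorem.
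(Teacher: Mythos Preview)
Your strategy matches the paper's: closedness of $\mathcal C$ by diagonalisation, weak--strong uniqueness to force the constraint \eqref{E13} on regular reachable data, then density of regular data violating that constraint. The substantive difference is in the weak--strong step. Rather than applying a relative-energy inequality to each $[\vr_n,\vm_n,E_n]$ against the smooth solution and then controlling $\mathcal E_{\rm rel}(z_{0,n}\,|\,z_0)\to 0$ --- precisely the obstacle you flag --- the paper passes to the Young measure generated by the sequence, notes that strong $L^1$ convergence of the data forces the initial measure to be $\delta_{z_0}$, and invokes the DMV weak--strong uniqueness theorem of \cite{BreFei17}. This sidesteps your obstacle entirely: in the DMV framework the relative energy at $t=0$ is zero by construction, and one obtains a.e.\ convergence of the solutions directly from \eqref{DMV4}. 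Your route is not wrong, but making it rigorous requires exactly the uniform-integrability argument you anticipate; the DMV detour is the cleaner way to package it. For the density step, the paper replaces your ad hoc perturbation by a short lemma: writing $\vc w=\vm_0/\sqrt{\vr_0}$, the solution set $\mathcal S\subset L^2(\Omega;R^N)$ of \eqref{FI1bis} is closed and invariant under scalar multiplication, so if it contained any ball a homogeneity argument would force $\mathcal S=L^2$, which is absurd for $N\ge 2$. The boundary-compatibility issue you mention is handled by restricting to smooth data that are constant with $\vm_0=0$ in a neighbourhood of $\partial\Omega$, for which Schochet's conditions are trivially met.
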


	\begin{corollary} \label{CE1} 
		
		The complement of the set of wild initial data (in the sense of Definition \ref{ND1}) contains an open dense set in
		$\texttt{L}^1_{+,s_0} (\Omega; R^{N + 2})$. 
	\end{corollary}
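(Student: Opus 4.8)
The plan is to obtain Corollary \ref{CE1} directly from Theorem \ref{TE1} by proving the single inclusion that every wild datum is reachable, i.e. that the set of wild initial data in the sense of Definition \ref{ND1} is contained in the set of reachable data of Definition \ref{ND2}. Granting this inclusion, passing to complements inside $\texttt{L}^1_{+,s_0}(\Omega;R^{N+2})$ shows that the complement of the reachable set is contained in the complement of the set of wild initial data; since the former is open and dense by Theorem \ref{TE1}, the latter contains an open dense set, which is exactly the assertion to be proved. Thus the whole mathematical content is carried by Theorem \ref{TE1}, and the task reduces to a formal verification.

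To establish the inclusion I fix a wild datum $[\vr_0,\vm_0,E_0]$ and test it against Definition \ref{ND2} using the \emph{constant} sequence $[\vr_{0,n},\vm_{0,n},E_{0,n}]:=[\vr_0,\vm_0,E_0]$ for every $n$. The three requirements of (W)-convergence must be checked. The $L^1$ convergence \eqref{E9} is immediate. The positivity $\vr_{0,n}>0$ and the entropy bound $s(\vr_{0,n},\vm_{0,n},E_{0,n})\ge s_0$ are inherited from the class of wild data, which by Definition \ref{ND1} are the initial states of an admissible weak solution of the conservative system \eqref{E6} and therefore lie in $\texttt{L}^1_{+,s_0}$ with strictly positive density. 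It then remains to exhibit, for each $n$, an admissible weak solution emanating from $[\vr_{0,n},\vm_{0,n},E_{0,n}]$ for which \eqref{E10} holds, and for this I simply take the wild solution $[\vr,\vm,E]$ provided by Definition \ref{ND1}.

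The decisive observation is that \eqref{E10} is then satisfied \emph{exactly}, not merely asymptotically. Writing $M:=\frac{\vm\otimes\vm}{\vr}-\frac{1}{N}\frac{|\vm|^2}{\vr}\mathbb{I}$ for the traceless momentum flux of the wild solution, Definition \ref{ND1} (through the system \eqref{NE7}) gives $\vc{H}^\perp[\Div M]=0$, so that $\Div M=\vc{H}[\Div M]$; since the Helmholtz projection $\vc{H}$ takes values in solenoidal fields, this forces $\Div\Div M=\Div\,\vc{H}[\Div M]=0$. Integrating by parts twice in the space variable, and using that $\varphi$ is compactly supported, yields
\[
\int_0^T\intO{ M:\Grad^2\varphi }\dt=\int_0^T\intO{ (\Div\Div M)\,\varphi }\dt=0
\]
for every $\varphi\in\DC((0,T)\times\Omega)$. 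Hence the left-hand side of \eqref{E10} vanishes for each $n$ and its limit is trivially zero, so $[\vr_{0,n},\vm_{0,n},E_{0,n}]\stackrel{(W)}{\rightarrow}[\vr_0,\vm_0,E_0]$ and the wild datum is reachable.

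I expect no genuine obstacle once Theorem \ref{TE1} is in hand. The only points demanding care are precisely the two compatibility checks for the constant sequence: that the wild data of Definition \ref{ND1} genuinely carry strictly positive density and meet the threshold $s_0$ (so that the first requirement of (W)-convergence is consistent with taking $[\vr_{0,n},\vm_{0,n},E_{0,n}]=[\vr_0,\vm_0,E_0]$), and the identity $\Div\Div M=0$ for wild solutions. Should one wish to admit wild data with vacuum zones, the constant sequence must be replaced by a density-regularized one, but since such data form a negligible subclass this does not affect the conclusion; both verifications are otherwise encoded in Definition \ref{ND1} and the defining system \eqref{NE7}.
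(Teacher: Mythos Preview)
Your proposal is correct and follows exactly the paper's approach: the paper's proof simply notes that wild initial data are reachable (citing the earlier remark that \eqref{E10} is trivially satisfied for wild solutions because $\vc{H}^\perp[\Div M]=0$), passes to complements, and invokes Theorem~\ref{TE1}. Your write-up merely spells out in more detail the verification that the constant sequence witnesses $(W)$-convergence, which the paper takes for granted.
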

	
	\begin{proof}
		As mentioned above, wild initial data are reachable and hence the complement of the set of wild initial data contains the complement of the set of reachable data. Thus Theorem \ref{TE1} yields the claim.
	\end{proof}
	
	The main part of the paper is devoted to the proof of Theorem \ref{TE1}. Our strategy is to identify a large (dense) set of \emph{regular}
	data for the Euler system that are not reachable. To see this, we consider initial data $[\vr_0, \vm_0, E_0]$ giving rise to a smooth
	solution $[\vr, \vm, E]$ defined on a maximal time interval $[0, T_{{\rm max}})$. Then we proceed in several steps:
	
	\begin{itemize}
		\item Assuming the data $[\vr_0, \vm_0, E_0]$ are reachable we show that the associated sequence of
		solutions $\{ \vr_n, \vm_n, E_n \}_{n=1}^\infty$ generates a Young measure that can be identified with a dissipative measure
		valued (DMV) solution to the Euler system in the sense of \cite{BreFei17}, see Section \ref{DMV}.
		
		\item Next we realize that, thanks to the strong convergence required in \eqref{E9}, the limit DMV solution starts from
		the initial data represented by the parameterized family of Dirac masses,
		\[
		\left\{ \delta_{\vr_0(x),\vm_0(x), E_0(x) } \right\}_{x \in \Omega}.
		\]
		Thanks to the general \emph{weak--strong uniqueness} principle, we conclude that the DMV solution coincides with the strong
		solution on the time interval $[0, T_{\rm max})$, more specifically, the DMV solution is represented by the parameterized family
		of Dirac masses,
		\[
		\left\{ \delta_{\vr(t,x),\vm(t,x), E(t,x) } \right\}_{(t,x) \in (0, T_{{\rm max}}) \times \Omega},
		\]
		see Section \ref{WS}. In particular, the solutions $[\vr_n, \vm_n, E_n]$ converge strongly and we may assume
		\begin{equation} \label{E12}
		\vr_n \to \vr, \ \vm_n \to \vm,\ E_n \to E \ \quad\mbox{a.a. in}\ (0,T_{{\rm max}}) \times \Omega.
		\end{equation}
		
		\item Thanks to the strong convergence \eqref{E12}, condition \eqref{E10}, and smoothness of the limit solution, we deduce that
		\[
		\Div \Div \left( \frac{ \vc{m} \otimes \vc{m} }{\vr} \right) - \frac{1}{N} \Del \left( \frac{|\vc{m}|^2}{\vr} \right) = 0 \ \quad\mbox{in}\ (0,T) \times \Omega.
		\]
		see Section \ref{WS} for details. In particular, as the limit solution is continuous up to $t= 0$,
		\begin{equation} \label{E13}
		\Div \Div \left( \frac{ \vc{m}_0 \otimes \vc{m}_0 }{\vr_0} \right) - \frac{1}{N} \Del \left( \frac{|\vc{m}_0|^2}{\vr_0} \right) = 0 \ \quad\mbox{in}\ \Omega.
		\end{equation}

		\item In Section \ref{FI}, we show that \eqref{E13} can be satisfied for a very narrow class of the initial data. In particular, we complete the proof of Theorem
		\ref{TE1}.
		
	\end{itemize}
	A related result for the wild solutions obtained via the Riemann problem is shown in Section \ref{DD}.
	
	\section{Dissipative measure valued (DMV) solutions}
	\label{DMV}
	
	\subsection{Definitions}
	
	First we recall the standard definition of the admissible weak solutions to the Euler system:
	
	\begin{definition}[admissible weak solution] \label{DDMV2}
		
		Let $\gamma = 1 + \frac{1}{c_v}$.
		We say that $[\vr, \vm, E]$ is an \emph{admissible weak solution} to the Euler system (\ref{E1}--\ref{E3}) if
		\begin{align*}
		\vr &\in C_{{\rm weak}}([0,T]; L^\gamma (\Omega)), \ \vr \geq 0,\ \\
		\vm &\in C_{{\rm weak}}([0,T]; L^{\frac{2 \gamma}{\gamma + 1}}(\Omega; R^N)), \ \vm=0\ \text{whenever}\ \vr=0,\\
		E &\in C_{{\rm weak}} ([0,T]; L^1(\Omega)), \ E\geq 0,
		\end{align*}
		and the following holds:
		\begin{itemize}
			
			\item
			\[
			\int_0^T \intO{ \left[ \vr  \partial_t \varphi + \vm  \cdot \Grad \varphi \right] } \dt = - \intO{ \vr_0 \varphi(0, \cdot) }
			\]
			for any $\varphi \in C^1_c([0,T) \times \Ov{\Omega})$;
			\item
			\begin{equation*}
			\begin{split}
			\int_0^T \intO{ \left[ \vm \cdot \partial_t\bfphi +
				\left( \frac{\vm \otimes \vm}{\vr} \right)  : \Grad \bfphi + p
				\left(\vr, \vc{m}, E \right) \Div \bfphi \right] } \dt \quad\\
			= - \intO{ \vc{m}_0  \cdot \bfphi (0, \cdot) }
			\end{split}
			\end{equation*}
			for any $\bfphi \in C^1_c([0,T) \times \Ov{\Omega}; R^N)$, $\bfphi \cdot \vc{n}|_{\partial \Omega} = 0$;
			
			\item
			\begin{equation} \label{DMV1}
			\int_0^T \intO{ \left[ E \partial_t \varphi + \left( E + p(\vr, \vm, E) \right) \left( \frac{\vm}{\vr} \right) \cdot
				\Grad \varphi \right] } \dt = - \intO{ E_0 \varphi (0, \cdot) }
			\end{equation}
			for any $\varphi \in C^1_c ([0,T) \times \Ov{\Omega})$;

			\item
			\begin{equation} \label{RenEntr}
			\begin{split}
			- &\intO{ \vr_0 Z \left( s  \left(\varrho_0, \vc{m}_0, E_0  \right) \right) \varphi(0, \cdot) }
			\\ &\geq \int_0^T \intO{ \left[ \vr Z \left( s \left(\vr, \vc{m}, E \right) \right)  \partial_t
				\varphi  + Z \left( s \left(\varrho, \vm, E \right) \right) \vc{m} \cdot \Grad \varphi  \right] } \dt
			\end{split}
			\end{equation}
			for any $\varphi \in C^1_c([0,T) \times \Ov{\Omega})$, $\varphi \geq 0$,
			and $Z(s) \leq \Ov{Z}$, $Z'(s) \geq 0$.

		\end{itemize}

	\end{definition}
	
	\begin{remark}
		Here the entropy inequality is satisfied in the renormalized sense, see e. g. \cite{CheFri02}. 
	\end{remark}

	To carry out the programme delineated at the end of the preceding section, we introduce the concept of
	dissipative measure valued (DMV) solution, see \cite{BreFei17}.
	Let
	\[
	\mathcal{P} = \left\{ [\vr, \vm, E] \ \Big| \ \vr \geq 0, \ \vm \in R^N, \ E \geq 0 \right\}
	\]
	be the phase space associated to the Euler system.
	
	\begin{definition} [DMV solution] \label{DDMV1}
		
		A parametrized family of probability measures $\{ \mathcal{V}_{t,x} \}_{(t,x) \in (0,T) \times \Omega}$
		on the set $\mathcal{P}$,
		\[
		\mathcal{V} \in L^\infty_{\rm weak-(*)} ((0,T) \times \Omega; {\rm prob} [ \mathcal{P} ] )
		\]
		is called a \emph{dissipative measure--valued (DMV) solution} to the compressible Euler system
		with the initial data $\{ \mathcal{V}_{0,x} \}_{x \in \Omega}$,
		if the following holds:
		
		\begin{itemize}
			
			\item
			\[
			\int_0^T \intO{ \left[ \left< \mathcal{V}_{t,x}; \vr \right> \partial_t \varphi + \left< \mathcal{V}_{t,x}; \vm \right> \cdot \Grad \varphi \right] } \dt = - \intO{ \left< \mathcal{V}_{0,x}; \vr \right> \varphi(0, \cdot) }
			\]
			for any $\varphi \in C^1_c([0,T) \times \Ov{\Omega})$;
			\item
			\[
			\begin{split}
			\int_0^T \intO{ \bigg[ \left< \mathcal{V}_{t,x}; \vm \right> \cdot \partial_t\bfphi &+
				\left< \mathcal{V}_{t,x} ; \frac{ \vm \otimes \vm}{\vr} \right> : \Grad \bfphi \\
				&\qquad\qquad+ \left< \mathcal{V}_{t,x}; p
				\left(\vr, \vc{m}, E \right) \right> \Div \bfphi \bigg] } \dt \\ &=
			\int_0^T \int_{\Omega} \Grad \bfphi : {\rm d}\mu_C - \intO{ \left< \mathcal{V}_{0,x}; \vc{m} \right> \cdot \bfphi (0, \cdot) }
			\end{split}
			\]
			for any $\bfphi \in C^1_c([0,T) \times \Ov{\Omega}; R^N)$, $\bfphi \cdot \vc{n}|_{\partial \Omega} = 0$, where $\mu_C$ is a (vectorial) signed measure on $[0,T] \times \Ov{\Omega}$;
			\item
			\begin{equation} \label{DMV2}
			\intO{ \left< \mathcal{V}_{\tau ,x}; E \right> }  + \mathcal{D}(\tau) \leq
			\intO{ \left< \mathcal{V}_{0,x}; E \right> }
			\end{equation}
			for a.a. $\tau \in (0,T)$, where $\mathcal{D} \in L^\infty(0,T)$, $\mathcal{D} \geq 0$ is called
			dissipation defect;

			\item
			\[
			\begin{split}
			&- \intO{  \left< \mathcal{V}_{0,x}; \vr Z \left(s  \left(\varrho, \vc{m}, E  \right) \right)  \right> \varphi(0, \cdot) }
			\\ &\geq \int_0^T \intO{ \left[ \left< \mathcal{V}_{t,x} ; \vr Z \left( s \left(\vr, \vc{m}, E \right) \right) \right> \right] \partial_t
				\varphi} \dt \\&\quad+ \int_0^T \intO{ \left[
				\left< \mathcal{V}_{t,x} ; Z \left( s \left(\varrho, \vm, E \right) \right) \vc{m} \right> \cdot \Grad \varphi  \right] } \dt
			\end{split}
			\]
			for any $\varphi \in C^1_c([0,T) \times \Ov{\Omega})$, $\varphi \geq 0$,
			and $Z(s) \leq \Ov{Z}$, $Z'(s) \geq 0$;
			\item
			
			\[
			\left\| \mu_C \right\|_{\mathcal{M}([0, \tau] \times \Omega) } \leq c \int_0^\tau \mathcal{D}(t) \dt \ \quad\mbox{for a.a.}\ \tau \in (0,T).
			\]
			
		\end{itemize}

	\end{definition}
	
	The reader will have noticed that Definition \ref{DDMV1} is not a mere measure--valued variant of Definition \ref{DDMV2}. In particular, the energy equation \eqref{DMV1} has been replaced by its integrated version \eqref{DMV2}. The reader may consult
	\cite{BreFei17} for a thorough discussion of this new concept of solution. Note in particular, that \eqref{DMV2} implies 
	$$
	{\rm supp}[\mathcal{V}_{t,x}] \cap \{ \vr=0,\ \vm\neq 0\} = \emptyset,
	$$
	see \cite[Remark 2.7]{BreFei17}. Hence DMV solutions allow for treating vacuum. 
	
	\subsection{Generating a DMV solution}
	\label{SSDMV}

	Suppose now that
	\begin{equation} \label{DMV3}
	\begin{split}
	&[ \vr_{0,n}, \vm_{0,n}, E_{0,n} ] \to [\vr_0, \vm_0, E_0] \quad\ \mbox{in}\ L^1(\Omega; R^{N+2}),\\
	&\vr_{0,n} > 0 \ \mbox{a.a. in}\ \Omega,\quad\ s(\vr_{0,n}, \vm_{0,n}, E_{0,n}) \geq s_0 > -\infty,
	\end{split}
	\end{equation}
	where $[\vr_{0,n}, \vm_{0,n}, E_{0,n}]$ are initial data of admissible weak solutions $[\vr_n, \vm_n, E_n]$ defined on $(0,T) \times \Omega$.
	
	Passing to a suitable subsequence, we may suppose that $\{ \vr_n, \vm_n, E_n \}_{n=1}^\infty$ generates a Young measure
	\[
	\left\{ \mathcal{V}_{t,x} \right\}_{(t,x) \in (0,T) \times \Omega)},\
	\mathcal{V}_{t,x} \in {\rm prob} [\mathcal{P}].
	\]
	As shown in \cite[Section 2.1]{BreFei17}, the Young measure $\left\{ \mathcal{V}_{t,x} \right\}_{(t,x) \in (0,T) \times \Omega)}$
	is a DMV solution of the Euler system in the sense of Definition \ref{DDMV1}. Moreover, in view of \eqref{DMV3}, the initial
	measure $\{ \mathcal{V}_{0,x} \}_{x \in \Omega}$ reads
	\[
	\mathcal{V}_{0,x} = \delta_{\vr_0(x), \vm_0(x), E_{0}(x)} \ \quad\mbox{for a.a.}\ x \in \Omega.
	\]

	\section{Application of the weak--strong uniqueness principle}
	\label{WS}
	
	The weak-strong uniqueness principle (Theorem 3.3 in \cite{BreFei17}) asserts that a DMV solution coincides
	with the strong solution emanating from the same initial data at least on the life span of the latter. Evoking the situation
	from Section \ref{SSDMV}, we suppose now that
	\[
	[\vr_{0,n}, \vm_{0,n}, E_{0,n}] \stackrel{(W)}{\rightarrow} [\vr_0, \vm_0, E_0],
	\]
	where $[\vr_0, \vm_0, E_0]$ are now regular initial data generating a $C^1-$solution $[\vr, \vm, E]$ of the Euler system
	(\ref{E1}--\ref{E3}) on a maximal time interval $[0, T_{{\rm max}})$. Without loss of generality, we may suppose that
	$0 < T < T_{{\rm max}}$.
	
	Applying the weak strong uniqueness principle we obtain
	\begin{equation} \label{DMV4}
	\mathcal{V}_{t,x} = \delta_{\vr(t,x), \vm(t,x), E(t,x)} \ \quad\mbox{for a.a.}\ (t,x) \in (0,T) \times \Omega.
	\end{equation}
	In particular, we may assume
	\[
	\vr_n \to \vr, \ \vm_n \to \vm,\ E_n \to E \ \quad\mbox{a.a. in}\ (0,T) \times \Omega
	\]
	for the associated weak solutions $[\vr_n, \vm_n, E_n]$. Consequently, as $[\vr_n, \vm_n, E_n]$ satisfy \eqref{E10}, we get
	\begin{align*}
	\int_0^T \intO{ \left[ \left( \frac{\vc{m} \otimes \vc{m} }{\vr} \right) : \Grad^2 \varphi - \frac{1}{N} \frac{ |\vc{m}|^2 }{\vr} \Del \varphi \right]
	} \dt = 0 \ \\\mbox{for any} \ \varphi \in \DC((0,T) \times \Omega).
	\end{align*}
	Finally, as the limit solution is continuous up to the time $t = 0$, we may infer that
	in particular,
	\begin{equation} \label{DMV5}
	\intO{ \left[ \left( \frac{\vc{m}_0 \otimes \vc{m}_0 }{\vr_0} \right) : \Grad^2 \varphi - \frac{1}{N} \frac{ |\vc{m}_0|^2 }{\vr_0} \Del \varphi \right]
	}  = 0 \quad\ \mbox{for any} \ \varphi \in \DC(\Omega).
	\end{equation}
	For $C^2$ initial data, relation \eqref{DMV5} can be rewritten as a non--linear differential equation
	\[
	\Div \Div \left( \frac{\vc{m}_0 \otimes \vc{m}_0 }{\vr_0} \right) - \Del \left( \frac{1}{N} \frac{ |\vc{m}_0|^2 }{\vr_0} \right) = 0 \quad\ \mbox{in}\ \Omega.
	\]

	\section{Reachability}
	\label{FI}
	
	In this section we finish the proof of Theorem \ref{TE1}. Introducing $\vc{w} = \frac{ \vc{m}_0 }{\sqrt{\vr_0}}$ we may write
	\eqref{DMV5} in a concise form
	\begin{equation} \label{FI1bis}
	\intO{ \left[ \vc{w} \otimes \vc{w}  : \Grad^2 \varphi - \frac{1}{N} |\vc{w}|^2 \Del \varphi \right]
	}  = 0 \quad\ \mbox{for any} \ \varphi \in \DC(\Omega).
	\end{equation}
	We consider solutions of \eqref{FI1bis} in the space $L^2(\Omega; R^N)$,
	\[
	\mathcal{S} = \left\{ \vc{w} \in L^2(\Omega; R^N)\ \Big| \ \vc{w} \ \mbox{solves}\ \eqref{FI1bis} \right\}.
	\]
	It is easy to check that:
	\begin{itemize}
		
		\item the set $\mathcal{S}$ is closed in $L^2(\Omega; R^N)$; 
		\item if $\vc{w} \in \mathcal{S}$, then $\lambda \vc{w} \in \mathcal{S}$ for any $\lambda \in R$.
		
	\end{itemize}
	
	\begin{lemma} \label{lemma41}
		The set $\mathcal{S}$ is nowhere dense in $L^2(\Omega; R^N)$, meaning the (closure of the) set $\mathcal{S}$ does not contain any ball in $L^2(\Omega; R^N)$.
	\end{lemma}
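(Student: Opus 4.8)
The plan is to show that $\mathcal{S}$ is a closed proper subspace-like set (it is a cone, closed under scalar multiplication but not necessarily under addition) whose closure contains no $L^2$-ball. Since $\mathcal{S}$ is already closed, it suffices to prove that $\mathcal{S}$ has empty interior. The natural strategy is to exhibit, near any given $\vc{w}_0 \in \mathcal{S}$ and at any scale $\varepsilon > 0$, a field $\vc{w}_1 \in L^2(\Omega;R^N)$ with $\|\vc{w}_1 - \vc{w}_0\|_{L^2} < \varepsilon$ but $\vc{w}_1 \notin \mathcal{S}$, i.e. violating \eqref{FI1bis} for some test function $\varphi$.

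**First I would** understand the constraint \eqref{FI1bis} as a linear condition on the symmetric rank-one-type tensor $\vc{w}\otimes\vc{w}$. Testing against $\varphi$ picks out a specific scalar combination of second derivatives; integrating by parts (formally) says that the distribution $\Div\Div(\vc{w}\otimes\vc{w}) - \frac{1}{N}\Del|\vc{w}|^2$ vanishes. Because this is a single scalar PDE constraint on an $N$-component field, the solution set should be genuinely "thin." The cleanest way to exploit thinness is to perturb $\vc{w}_0$ by a highly oscillatory or locally supported bump concentrated in a small ball $B \subset\subset \Omega$, chosen so that its contribution to the quadratic form in \eqref{FI1bis} does not cancel. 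Specifically, I would take $\vc{w}_1 = \vc{w}_0 + \vc{\psi}$ with $\vc{\psi}$ supported in $B$ and small in $L^2$; expanding $\vc{w}_1\otimes\vc{w}_1 = \vc{w}_0\otimes\vc{w}_0 + \vc{w}_0\otimes\vc\psi + \vc\psi\otimes\vc{w}_0 + \vc\psi\otimes\vc\psi$, and choosing $\vc\psi$ to make the quadratic term $\vc\psi\otimes\vc\psi$ produce a nonzero value of the left-hand side of \eqref{FI1bis} for a suitable $\varphi$ supported near $B$, while keeping $\|\vc\psi\|_{L^2}$ arbitrarily small.

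**The key mechanism** is that the quadratic term can be made to dominate the cross terms while still being small in $L^2$: if $\vc\psi$ oscillates rapidly, then $\vc\psi\otimes\vc\psi$ has a nonvanishing low-frequency (mean) part of definite sign, whereas the linear cross terms $\vc{w}_0\otimes\vc\psi$ oscillate and integrate to something controllable. Concretely, I would try $\vc\psi(x) = a\,\eta(x)\,\vc{b}\cos(k\cdot x/\delta)$ for a fixed direction $\vc{b}$, amplitude $a$, cutoff $\eta$, and large frequency $1/\delta$; then choose $\varphi$ so that $\vc{b}\otimes\vc{b} : \Grad^2\varphi - \frac1N|\vc{b}|^2\Del\varphi$ is not identically zero, which is always possible for $N\ge 2$ since $\vc{b}\otimes\vc{b} - \frac1N|\vc{b}|^2\mathbb{I}$ is a nonzero traceless matrix. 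The oscillation lets $\|\vc\psi\|_{L^2}\sim a$ stay small while the averaged quadratic contribution survives, giving a strictly nonzero value and hence $\vc{w}_1\notin\mathcal{S}$.

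**The main obstacle I anticipate** is bookkeeping the competition between the three contributions to \eqref{FI1bis} for the perturbed field and verifying that the nonzero quadratic contribution is not accidentally cancelled by the cross terms for \emph{every} choice of base point $\vc{w}_0$ and \emph{every} admissible $\varphi$ — in particular handling the possibility $\vc{w}_0=0$, where there are no cross terms and the argument is easiest, versus general $\vc{w}_0$. A cleaner alternative that sidesteps the oscillation estimates would be a dimension-counting or direct algebraic argument: since $\mathcal{S}$ is a closed cone, if it contained a ball it would have to be all of $L^2(\Omega;R^N)$ (a closed cone containing an interior point of the space must be the whole space); then I would simply produce one single explicit $\vc{w}$ violating \eqref{FI1bis} to reach a contradiction. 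I expect the latter packaging to be the most economical route, with the construction of one explicit violating field being the only substantive step.
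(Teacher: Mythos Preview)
Your oscillatory-perturbation approach is viable and would succeed with some bookkeeping, but the ``more economical'' alternative you favour at the end contains a genuine gap. The assertion that \emph{a closed cone containing an interior point of the space must be the whole space} is false. For instance, in any Hilbert space with orthonormal $e_1,e_2$, the set $\mathcal{C}=\{v:\ |(v,e_1)|\ge |(v,e_2)|\}$ is closed, stable under multiplication by every real scalar, contains the ball $B(e_1,1/3)$, yet $e_2\notin\mathcal{C}$. The cone property of $\mathcal{S}$ lets you dilate, not translate: from $B(\vc{w}_0,r)\subset\mathcal{S}$ you only get the double cone over that ball, which need not cover $L^2$.

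The paper's proof fixes exactly this point by exploiting the \emph{quadratic} structure of the defining functional, not just homogeneity. Assuming $B(\vc{w}_0,r)\subset\mathcal{S}$, one expands $(\vc{w}_0+\vc{w})\otimes(\vc{w}_0+\vc{w})$ to obtain, for all $\|\vc{w}\|\le r$,
\[
Q(\vc{w})+L_{\vc{w}_0}(\vc{w})=0,
\]
where $Q$ is the quadratic form from \eqref{FI1bis} and $L_{\vc{w}_0}$ is linear in $\vc{w}$. Replacing $\vc{w}$ by $\lambda\vc{w}$ with $0\le\lambda\le1$ gives $\lambda^2 Q(\vc{w})+\lambda L_{\vc{w}_0}(\vc{w})=0$ for all such $\lambda$, hence $Q(\vc{w})=0$ separately. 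This shows $B(0,r)\subset\mathcal{S}$; \emph{now} the cone property yields $\mathcal{S}=L^2(\Omega;R^N)$, and one finishes by exhibiting a single $\vc{w}$ outside $\mathcal{S}$, which is immediate for $N\ge2$. In effect the paper carries out the separation of linear and quadratic contributions that your oscillatory construction achieves asymptotically, but does it algebraically via the scaling parameter $\lambda$, avoiding all estimates.
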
 
	
	\begin{proof}
		Arguing by contradiction, we suppose that there is $\vc{w}_0 \in L^2(\Omega; R^N)$ and $r > 0$ such that the ball centered at $\vc{w}_0$ with radius $r$ is contained in $\mathcal{S}$. Hence
		\begin{equation} \label{CON1}
		\begin{split}
		\intO{ \left[ (\vc{w} + \vc{w}_0) \otimes (\vc{w} + \vc{w}_0)  : \Grad^2 \varphi - \frac{1}{N} |\vc{w} + \vc{w}_0|^2 \Del \varphi \right]
		}  = 0 \ \\\mbox{for any} \ \varphi \in \DC(\Omega), 
		\mbox{and any}\ \vc{w},\ \| \vc{w} \|_{L^2(\Omega; R^N)} \leq r.
		\end{split}
		\end{equation} 
		
		Next, we show that this implies that the ball centered at $0$ with radius $r$ is contained in $\mathcal{S}$, too. To this end, we write
		\[
		\begin{split}
		&\intO{ \left[ (\vc{w} + \vc{w}_0) \otimes (\vc{w} + \vc{w}_0)  : \Grad^2 \varphi - \frac{1}{N} |\vc{w} + \vc{w}_0|^2 \Del \varphi \right]
		}\\
		&= \intO{ \left[ \vc{w}  \otimes \vc{w}   : \Grad^2 \varphi - \frac{1}{N} |\vc{w}|^2 \Del \varphi \right] } \\
		&\quad+
		\intO{ \left[ (\vc{w} \otimes \vc{w}_0 + \vc{w}_0 \otimes \vc{w})  : \Grad^2 \varphi - \frac{2}{N} \vc{w} \cdot \vc{w}_0 \Del \varphi \right] }\\
		&\quad+\intO{ \left[ \vc{w}_0  \otimes \vc{w}_0   : \Grad^2 \varphi - \frac{1}{N} |\vc{w}_0|^2 \Del \varphi \right] }.
		\end{split}
		\] 
		Thus using the fact that $\vc{w}_0$ solves \eqref{FI1bis} and \eqref{CON1} we conclude
		\begin{equation} \label{CON2}
		\begin{split}
		&\intO{ \left[ \vc{w}  \otimes \vc{w}   : \Grad^2 \varphi - \frac{1}{N} |\vc{w}|^2 \Del \varphi \right] } \\&+
		\intO{ \left[ (\vc{w} \otimes \vc{w}_0 + \vc{w}_0 \otimes \vc{w})  : \Grad^2 \varphi - \frac{2}{N} \vc{w} \cdot \vc{w}_0 \Del \varphi \right] } \ =\ 0\\
		&\qquad\qquad\qquad\mbox{for any} \ \varphi \in \DC(\Omega)\ \mbox{and any}\ \vc{w},\ \| \vc{w} \|_{L^2(\Omega; R^N)} \leq r.
		\end{split}
		\end{equation}
		
		If relation \eqref{CON2} holds for any $\vc{w}$, $\| \vc{w} \|_{L^2(\Omega; R^N)} \leq r$, it must hold for $\lambda \vc{w}$, $0 \leq \lambda \leq 1$. Consequently,
		we get from \eqref{CON2},
		\[
		\begin{split}
		0 &= (\lambda^2 - \lambda)\intO{ \left[ \vc{w}  \otimes \vc{w}   : \Grad^2 \varphi - \frac{1}{N} |\vc{w}|^2 \Del \varphi \right] } \\
		&\quad+ \lambda \intO{ \left[ \vc{w}  \otimes \vc{w}   : \Grad^2 \varphi - \frac{1}{N} |\vc{w}|^2 \Del \varphi \right] } \\
		&\quad+ \lambda
		\intO{ \left[ (\vc{w} \otimes \vc{w}_0 + \vc{w}_0 \otimes \vc{w})  : \Grad^2 \varphi - \frac{2}{N} \vc{w} \cdot \vc{w}_0 \Del \varphi \right] } \\
		&= (\lambda^2 - \lambda)\intO{ \left[ \vc{w}  \otimes \vc{w}   : \Grad^2 \varphi - \frac{1}{N} |\vc{w}|^2 \Del \varphi \right] }
		\ \quad\mbox{for any}\ 0 \leq \lambda \leq 1.
		\end{split}
		\]
		We deduce
		\[
		\begin{split}
		&\intO{ \left[ \vc{w}  \otimes \vc{w}   : \Grad^2 \varphi - \frac{1}{N} |\vc{w}|^2 \Del \varphi \right]
		}  = 0 \ \\&\quad \mbox{for any} \ \varphi \in \DC(\Omega), 
		\mbox{and any}\ \vc{w},\ \| \vc{w} \|_{L^2(\Omega; R^N)} \leq r.
		\end{split}
		\]
		Thus the set $\mathcal{S}$ contains the ball of radius $r > 0$ centered at $0$ but as it is invariant with respect to multiplication by any real constant, we conclude 
		\[
		\mathcal{S} = L^2(\Omega; R^N),
		\]
		which is obviously false as soon as $N \geq 2$. 
		
	\end{proof}
	
	\begin{lemma} \label{lemma42}
		The set of reachable data in the sense of Definition \ref{ND2} is closed in $L^1((0,T) \times \Omega)$. 
	\end{lemma}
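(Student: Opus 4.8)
The plan is to argue by a diagonal extraction coupled with a uniform bound on the convective fluxes. Suppose $\left\{ [\vr_0^k, \vm_0^k, E_0^k] \right\}_{k=1}^\infty$ is a sequence of reachable data converging in $L^1(\Omega; R^{N+2})$ to some trio $[\vr_0, \vm_0, E_0]$; the goal is to show that the limit is again reachable. By Definition \ref{ND2}, for each fixed $k$ there is an approximating sequence $[\vr_{0,n}^k, \vm_{0,n}^k, E_{0,n}^k] \stackrel{(W)}{\rightarrow} [\vr_0^k, \vm_0^k, E_0^k]$ as $n \to \infty$, together with the associated admissible weak solutions $[\vr_n^k, \vm_n^k, E_n^k]$ on $(0,T) \times \Omega$ satisfying \eqref{E10}. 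The whole argument will consist in selecting a single index $n = n(k)$ for each $k$ so that the resulting diagonal family $(W)$-converges directly to $[\vr_0, \vm_0, E_0]$.

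The crucial observation is that the convective fluxes are bounded in $L^1$ uniformly along the entire two-parameter family. Since the internal energy is non-negative, the kinetic energy is controlled by the total energy, $\frac{|\vm|^2}{\vr} \leq 2 E$, so that $\left| \frac{\vm \otimes \vm}{\vr} - \frac{1}{N} \frac{|\vm|^2}{\vr} \mathbb{I} \right| \leq C \frac{|\vm|^2}{\vr} \leq 2C\, E$. Testing the energy balance \eqref{DMV1} with test functions independent of $x$ yields conservation of the total energy, $\intO{ E_n^k (\tau) } = \intO{ E_{0,n}^k }$ for a.a.\ $\tau \in (0,T)$, and by the $L^1$-convergence of the initial energies the right-hand side stays bounded. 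Consequently the fluxes are bounded in $L^1((0,T) \times \Omega)$ by a constant depending only on $\sup_{k} \intO{ E_0^k } < \infty$, and therefore
\[
\left| \int_0^T \intO{ \left( \frac{\vm_n^k \otimes \vm_n^k}{\vr_n^k} - \frac{1}{N} \frac{|\vm_n^k|^2}{\vr_n^k} \mathbb{I} \right) : \Grad^2 \varphi } \dt \right| \leq C \| \varphi \|_{C^2}
\]
with $C$ independent of $k$ and $n$.

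Now fix a countable family $\left\{ \varphi_j \right\}_{j=1}^\infty \subset \DC((0,T) \times \Omega)$ that is dense in the $C^2$-topology. For each $k$, invoking the $(W)$-convergence of the $k$-th approximating sequence, I choose $n(k)$ so large that $\| [\vr_{0,n(k)}^k, \vm_{0,n(k)}^k, E_{0,n(k)}^k] - [\vr_0^k, \vm_0^k, E_0^k] \|_{L^1} \leq 1/k$ and, simultaneously, the flux integral tested against $\varphi_j$ is bounded by $1/k$ for all $j \leq k$; the positivity $\vr_{0,n(k)}^k > 0$ and the entropy bound $s \geq s_0$ are automatic. I set $[\Ov{\vr}_0^k, \Ov{\vm}_0^k, \Ov{E}_0^k] := [\vr_{0,n(k)}^k, \vm_{0,n(k)}^k, E_{0,n(k)}^k]$, whose witnessing admissible weak solutions are $[\vr_{n(k)}^k, \vm_{n(k)}^k, E_{n(k)}^k]$.

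It remains to verify that this diagonal sequence $(W)$-converges to $[\vr_0, \vm_0, E_0]$. The $L^1$-convergence follows from the triangle inequality together with $[\vr_0^k, \vm_0^k, E_0^k] \to [\vr_0, \vm_0, E_0]$, and the positivity and entropy conditions hold by construction. The only delicate point, and the main obstacle, is the verification of \eqref{E10} for \emph{every} test function: since \eqref{E10} is an asymptotic condition rather than a pointwise constraint, the diagonal selection must be combined with a density argument. For a fixed $\varphi_j$ the associated flux integral is at most $1/k$ once $k \geq j$, hence tends to zero; for an arbitrary $\varphi \in \DC((0,T) \times \Omega)$ I approximate $\varphi$ by some $\varphi_j$ in the $C^2$-norm and control the resulting error by the uniform bound of the second paragraph, concluding that the integral tends to zero for every $\varphi$. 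Thus the diagonal sequence realizes the $(W)$-convergence demanded in Definition \ref{ND2}, the limit $[\vr_0, \vm_0, E_0]$ is reachable, and the set of reachable data is closed.
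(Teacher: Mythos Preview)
Your argument is essentially the same as the paper's: a diagonal extraction against a countable $C^2$-dense family of test functions, closed off by a uniform $L^1$ bound on the traceless convective fluxes coming from the total-energy control. The paper phrases the density step via the Sobolev embedding $W^{\ell,2}_0 \hookrightarrow C^2$ rather than a direct $C^2$-dense countable set, but this is only cosmetic.

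One small overstatement: you assert the flux bound ``uniformly along the entire two-parameter family'', controlled by $\sup_k \intO{E_0^k}$. As written this is not justified, since for each fixed $k$ the approximating initial energies $E^k_{0,n}$ converge in $L^1$ to $E_0^k$ but their $L^1$-norms could be large for small $n$ in a $k$-dependent way. What you actually need (and what the paper uses) is the bound \emph{along the diagonal}, and that follows immediately from your own choice $\| E^k_{0,n(k)} - E_0^k \|_{L^1} \le 1/k$ together with $E_0^k \to E_0$ in $L^1$. With this harmless adjustment the proof is complete.
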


	\begin{proof}
		Let $[\vr_{0,n}, \vm_{0,n}, E_{0,n}]$ be a sequence of reachable data such that
		\[
		[\vr_{0,n}, \vm_{0,n}, E_{0,n}] \to 
		[\vr_0, \vm_0, E_0] \ \quad\mbox{in}\ L^1((0,T) \times \Omega; R^{N + 2}).
		\] 
		To see that the limit is reachable, we have to find a generating sequence satisfying \eqref{E9}, \eqref{E10}. 
		To this end, consider the generating sequences of data $[\vr_{0,n,m}, \vm_{0,n,m}, E_{0,n,m}]$ satisfying
		\[
		[\vr_{0,n,m}, \vm_{0,n,m}, E_{0,n,m}] \stackrel{(W)}{\rightarrow} [\vr_{0,n}, \vm_{0,n}, E_{0,n}]
		\ \quad\mbox{as}\ m \to \infty,
		\]
		meaning, in particular,
		\[
		\begin{split}
		\int_0^T \intO{ \left( \frac{\vm_{n,m} \otimes \vm_{n,m}} {\vr_{n,m}} - \frac{1}{N} \frac{ |\vm_{n,m}|^2}{\vr_{n,m}} \mathbb{I} \right)
			: \Grad^2 \varphi } \dt \to 0 \quad\ \mbox{as}\ m \to \infty\ \\\mbox{for any}\ \varphi \in \DC((0,T) \times \Omega)
		\end{split}
		\]
		for the corresponding solutions $\vr_{n,m}$, $\vm_{n,m}$ and any fixed $n$. Now, let
		\[
		\left\{ \varphi^k \right\}_{k = 1}^\infty, \ \varphi^k \in \DC((0,T) \times \Omega)
		\] 
		be a countable dense set in the Sobolev space
		\begin{equation} \label{AA1}
		\begin{split}
		W^{\ell,2}_0 ((0,T) \times \Omega) 
		\equiv \Ov{ \DC ((0,T) \times \Omega) }^{\| \cdot \|_{\ell,2} } \hookrightarrow C^2([0,T] \times \Ov{\Omega}) \ \\\mbox{as soon as} \ 
		\ell > \frac{N + 5}{2}.
		\end{split}
		\end{equation}
		
		By the diagonal method, we can find a subsequence
		\[
		\begin{split}
		[\vr_{0,(n, m)(j)}, \vm_{0, (n,m)(j)}, E_{0, (n,m)(j)} ] 
		\to [\vr_0, \vm_0, E_0] \ \quad\mbox{as}\ j \to \infty \ \\ \mbox{in}\ L^1((0,T) \times \Omega; R^{N+2})
		\end{split}
		\]
		such that
		\begin{equation} \label{AA2}
		\begin{split}
		\int_0^T \intO{ \left( \frac{\vm_{(n,m)(j)} \otimes \vm_{(n,m)(j)}} {\vr_{(n,m)(j)}} - \frac{1}{N} 
			\frac{ |\vm_{(n,m)(j)}|^2}{\vr_{(n,m)(j)}} \mathbb{I} \right)
			: \Grad^2 \varphi^k } \dt \to 0 \ \\\mbox{as}\ j \to \infty\ \mbox{for any}\ k.
		\end{split}
		\end{equation}
		Indeed, for given $j \geq 1$, we find $n(j)$ such that
		\[
		\left\| [\vr_{0,n(j)}, \vm_{0,n(j)}, E_{0,n(j)}] - [\vr_0, \vm_0, E_0] \right\| < \frac{1}{j}, 
		\] 
		and $m = m(n(j))$ such that
		\[
		\left\| [\vr_{0,n(j), m(n(j))}, \vm_{0,n(j), m(n(j))}, E_{0,n(j), m(n(j))}] - [\vr_{0,n(j)}, \vm_{0, n(j)}, E_{0, n(j)}] \right\| < \frac{1}{j},
		\]
		\[
		\begin{split}
		\left| \int_0^T \intO{ \left( \frac{\vm_{n(j),m(n(j))} \otimes \vm_{n(j),m(n(j))}} {\vr_{n(j),m(n(j))}} - \frac{1}{N} \frac{ 
				|\vm_{n(j),m(n(j))}|^2}{\vr_{n(j),m(n(j))}} \mathbb{I} \right)
			: \Grad^2 \varphi^k } \dt \right| < \frac{1}{j} \\ \mbox{for all}\ \varphi^k \in \DC((0,T) \times \Omega), \ k \leq j.
		\end{split}
		\]
		
		Finally, by virtue of \eqref{AA1}, any $\nabla^2_x \varphi$ can be uniformly approximated by $\nabla^2_x \varphi^k$, and, 
		as the energy $E_{n(j), m(j)}$ are bounded, we conclude that \eqref{AA2} holds for any $\varphi \in \DC((0,T) \times \Omega)$. 
		Thus we have shown that
		\[
		[\vr_{0,n(j), m(j)}, \vm_{0, n(j), m(j)}, E_{0, n(j), m(j)} ] 
		\stackrel{(W)}{\rightarrow}  [\vr_0, \vm_0, E_0] \ \quad\mbox{as}\ j \to \infty;
		\]
		whence $[\vr_0, \vm_0, E_0]$ is reachable.
		
	\end{proof}
	
	We are ready to complete the proof of Theorem \ref{TE1}. 
	The set of reachable data being closed (cf. Lemma \ref{lemma42}), its complement is open. In view of Lemma \ref{lemma41}, 
	we have only to show that the set of the smooth initial data $[\vr_0, \vm_0, E_0]$ giving rise to local-in-time regular solutions 
	is dense in the space $\texttt{L}^1_{+, s_0}(\Omega; R^{N+2})$. To this end, we record the following result by 
	Schochet \cite[Theorem 1]{Schochet86} that asserts the existence of smooth solutions whenever:
	
	\begin{itemize}
		\item $\Omega \subset R^N$ is a bounded domain with a sufficiently smooth boundary, say $\partial \Omega$ of class $C^\infty$;
		\item the initial data $[\vr_{0,E}, \vt_{0,E}, \vu_{0,E}]$ belong to the class
		\[
		\vr_{0, E}, \vt_{0, E} \in W^{m,2}(\Omega), \ \quad\vu_{0, E} \in W^{m,2}(\Omega; R^N),\quad\
		\vr_{0, E}, \ \vt_{0,E} > 0  \ \mbox{in}\ \Ov{\Omega}, 
		\]
		where $ m > N$;
		\item the compatibility conditions
		\[
		\partial^k_t \vu_{0,E} \cdot \vc{n}|_{\partial \Omega} = 0
		\]
		hold for $k=0,1,\dots,m$.
		
	\end{itemize}
	In particular, any trio of initial data, 
	\begin{equation}\label{AA3}
	\begin{split}
	&[\vr_0, \vm_0, E_0] \in C^\infty (\Ov{\Omega}; R^{N+2}), \ \\
	&\vm_0 = 0, \ \vr_0 = \Ov{\vr} > 0,\ 
	\vt_0 = \Ov{\vt} > 0\ \quad\mbox{on a neighborhood of}\ \partial \Omega,
	\end{split}
	\end{equation}
	gives rise to a smooth solution defined on a maximal time interval $[0, T_{{\rm max}})$. Seeing that the set 
	\eqref{AA3} is dense in $\texttt{L}^1_{+, s_0}(\Omega; R^{N+2})$, we have completed the proof of Theorem \ref{TE1}. 
	
	\begin{remark}
		Note that the result is basically independent of the choice of boundary conditions. Similarly, it can be easily extended to more general equation of state satisfying
		\eqref{E8}, including the isentropic case. Of course, the wild solution class is restricted only to those solutions that can be obtained
		by the splitting of the incompressible and acoustic part as specified in \eqref{NE7}.
	\end{remark}
	
	\section{Reachability for the Riemann problem} \label{DD}
	
	To the best of our knowledge there are only two methods available in the literature that give rise to infinitely many solutions of the compressible Euler system, which are both based on a splitting as in \eqref{NE6}. The first method \cite{FKKM17} was explained above, see \eqref{NE7}. The second method considers initial data constant in each of the two half spaces (Riemann data), see \cite{BKKMM18} and also \cite{ChiKre14}, \cite{ChiDelKre15}, \cite{KliMar18} for the isentropic case. Here -- instead of \eqref{NE7} -- equations $\eqref{NE6}_2$ and $\eqref{NE6}_3$ are replaced by 
	\begin{equation*} 
	\begin{split}
	\partial_t \vc{v} +\Div \left( \frac{\vc{m} \otimes \vc{m} }{\vr} - \frac{1}{N} \frac{|\vc{m}|^2}{\vr} \mathbb{I} - \vr 
	\mathbb{U} \right)   &= 0, \ \Big( \Div \vc{v} = 0 \Big), \\
	\partial_t (\Grad \Phi) + \Div(\vr \mathbb{U})
	+ \Grad \left( \frac{1}{N} \frac{|\vc{m}|^2}{\vr} \right) + \Grad p(\vr, \vc{m}, E)  &=0,\ \Big( \partial_t \vr + \Del \Phi = 0 \Big),
	\end{split}
	\end{equation*} 
	with a suitable matrix field $\mathbb{U}$. The density $\vr$, the acoustic potential $\Grad \Phi$ as well as the field $\mathbb{U}$ 
	are piecewise constant in $(0,T) \times \Omega$. In particular, there exists a partition of the set $(0,T) \times \Omega$ into a finite number 
	of sectors $Q_i, i=1, \dots m,$ such that the wild solutions satisfy 
	\[
	\begin{split}
	\int_{Q_i} \left[ \frac{\vc{m} \otimes \vc{m} }{\vr} - \frac{1}{N} \frac{ |\vm|^2 }{\vr} \mathbb{I} \right]
	: \Grad^2 \varphi \dxdt = 0 \ \\\mbox{for any} \ \varphi \in \DC(Q_i), \ i = 1,\dots, m.
	\end{split}
	\] 
	Consequently, we may accommodate the data reachable by this method replacing \eqref{E10} by
	\[
	\begin{split}
	\int_0^T \intO{ \left( \frac{\vm_n \otimes \vm_n} {\vr_n} - \frac{1}{N} \frac{ |\vm_n|^2}{\vr_n} \mathbb{I} \right)
		: \Grad^2 \varphi } \dt \to 0 \ \quad\mbox{as}\ n \to \infty\ \\\mbox{for any}\ \varphi \in \DC(Q),
	\end{split}
	\]
	where $Q \subset (0,T) \times \Omega$ is an open set such that $\Ov{Q} = [0,T] \times \Ov{\Omega}$.
	
	The above observation motivates the following extension of the concept of reachability. We say that $Q$ is 
	a \emph{partition} of the domain $(0,T) \times \Omega$ if 
	\begin{itemize}
		\item 
		$Q \subset (0,T) \times \Omega$ is an open set;
		\item $\Ov{Q} = [0,T] \times \Ov{\Omega}.$
	\end{itemize}
	We say that a family of partitions $\mathcal{Q}$ is a \emph{closed partition set} if it is closed with respect to the 
	Hausdorff complementary topology. More specifically, any sequence $\{ Q_n \}_{n=1}^\infty \subset \mathcal{Q}$ contains 
	a subsequence such that 
	\[
	Q^c_{n(k)} \stackrel{(H)}{\rightarrow} Q^c  \ \quad\mbox{as} \ k \to \infty \ \mbox{for some}\ Q \in \mathcal{Q},
	\]
	where the symbol $\stackrel{(H)}{\rightarrow}$ denotes convergence in the Hausdorff metric and $Q^c_n$ denotes the complement $Q^c_n \equiv ([0,T] \times \Ov{\Omega} ) \setminus Q_n$.
	
	An example of a closed partition set related to the convex integration method is 
	\[
	\begin{split}
	\mathcal{Q} = \Big\{ &Q \subset (0,T) \times \Omega \ \Big| \\
	&\ Q = ((0,T) \times \Omega) \setminus 
	(\cup_{i = 1}^M H_i), \ H_i - \mbox{a hyperplane in}\ R^{N + 1} \Big\},
	\end{split}
	\]
	where $M$ is a given positive integer. Note that this is indeed a closed partition set since $(0,T)\times\Omega$ is a bounded subset of $R^{N+1}$.
	
	The following property is well known, cf. \cite{Pironneau84}: 
	\begin{equation} \label{DD1}
	\begin{array}{c}
	Q^c_{n} \stackrel{(H)}{\rightarrow} Q^c \ \mbox{and}\ K \subset Q \ \\\mbox{is a compact set} 
	\end{array}
	\Longrightarrow 
	\begin{array}{c}
	\mbox{there exists}\ n(K) \ \mbox{such that}\ \\K \subset Q_n \ \mbox{for all}\ n \geq n(K)
	\end{array}.
	\end{equation}
	
	Next, we introduce the following generalization of $(W)-$convergence. Let $\mathcal{Q}$ be a closed partition set. 
	We say that a sequence of data $\left\{ \vr_{0,n}, \vm_{0,n}, E_{0,n} \right\}_{n=1}^\infty$ \emph{$(W[\mathcal{Q}])$-converges} to
	$[\vr_0, \vm_0, E_0]$,
	\[
	[ \vr_{0,n}, \vm_{0,n}, E_{0,n} ] \stackrel{(W[\mathcal{Q}])}{\rightarrow} [\vr_0, \vm_0, E_0],
	\]
	if
	\begin{itemize}
		\item
		\[
		\vr_{0,n} > 0,\quad \ s(\vr_{0,n}, \vm_{0,n}, E_{0,n}) \geq  s_0 > -\infty;
		\]
		\item
		\[
		[ \vr_{0,n}, \vm_{0,n}, E_{0,n} ] \to [\vr_0, \vm_0, E_0] \ \quad\mbox{in}\ L^1(\Omega; R^{N+2});
		\]
		\item
		the initial data $[\vr_{0,n},
		\vm_{0,n}, E_{0,n}]$ give rise to a sequence of admissible weak solutions $[\vr_n, \vm_n, E_n]$ satisfying
		\begin{equation} \label{DD2}
		\begin{split}
		\int_0^T \intO{ \left( \frac{\vm_n \otimes \vm_n} {\vr_n} - \frac{1}{N} \frac{ |\vm_n|^2}{\vr_n} \mathbb{I} \right)
			: \Grad^2 \varphi } \dt \to 0 \ \quad\mbox{as}\ n \to \infty\ \\\mbox{for any}\ \varphi \in \DC(Q)
		\end{split}
		\end{equation}
		for some $Q \in \mathcal{Q}$. 
		
	\end{itemize}
	
	\begin{definition} \label{DDD1}
		
		Let $\mathcal{Q}$ be a closed partition set.
		We say that a trio $[\vr_0, \vm_0, E_0]$ is \emph{$\mathcal{Q}-$reachable} if there exists a sequence of initial data 
		$\left\{ \vr_{0,n}, \vm_{0,n}, E_{0,n} \right\}_{n=1}^\infty$ such that
		\[
		[ \vr_{0,n}, \vm_{0,n}, E_{0,n} ] \stackrel{(W[\mathcal{Q}])}{\rightarrow}  [\vr_0, \vm_0, E_0].
		\]

	\end{definition}
	
	Obviously any reachable data in the sense of Definition \ref{ND2} are $\mathcal{Q}$-reachable so the set 
	of $\mathcal{Q}-$reachable data is always larger for any closed partition set.
	
	In order to adapt the arguments used in Sections \ref{WS}, \ref{FI} we need the following result.
	
	\begin{lemma} \label{LDD1}
		
		Let $\mathcal{Q}$ be a closed partition set. 
		
		Then the set of $\mathcal{Q}-$reachable data is closed in $L^1((0,T) \times \Omega; R^{N+2})$.

	\end{lemma}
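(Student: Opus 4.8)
The plan is to mimic the proof of Lemma \ref{lemma42}, the only genuinely new ingredient being the treatment of the partitions $Q_n \in \mathcal{Q}$ that vary with the generating sequences. Let $[\vr_{0,n}, \vm_{0,n}, E_{0,n}]$ be $\mathcal{Q}$-reachable data converging to $[\vr_0, \vm_0, E_0]$ in $L^1((0,T)\times\Omega; R^{N+2})$. For each fixed $n$ I fix a generating sequence $[\vr_{0,n,m}, \vm_{0,n,m}, E_{0,n,m}] \stackrel{(W[\mathcal{Q}])}{\rightarrow} [\vr_{0,n}, \vm_{0,n}, E_{0,n}]$ as $m \to \infty$, associated with some partition $Q_n \in \mathcal{Q}$; in particular the corresponding admissible weak solutions $[\vr_{n,m}, \vm_{n,m}, E_{n,m}]$ satisfy \eqref{DD2} for every $\varphi \in \DC(Q_n)$.

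First I would exploit the defining property of a closed partition set: passing to a subsequence (not relabelled), I may assume $Q^c_n \stackrel{(H)}{\rightarrow} Q^c$ for some limit partition $Q \in \mathcal{Q}$, which will be the partition witnessing the $\mathcal{Q}$-reachability of the limit datum. Next I fix a countable set $\{\varphi^k\}_{k=1}^\infty \subset \DC(Q)$ dense in $W^{\ell,2}_0(Q)$ with $\ell > \frac{N+5}{2}$, so that the embedding $W^{\ell,2}_0(Q) \hookrightarrow C^2$ of \eqref{AA1} holds. Each $\varphi^k$ has compact support $K_k \subset Q$, so by \eqref{DD1} there is an index $N_k$ with $K_k \subset Q_n$, hence $\varphi^k \in \DC(Q_n)$, for all $n \geq N_k$. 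This is precisely the mechanism that lets the admissibility relation \eqref{DD2}, which a priori holds only for test functions supported in $Q_n$, be applied to the fixed test functions $\varphi^k$ once $n$ is large.

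With this in hand I run the diagonal selection exactly as in Lemma \ref{lemma42}. For each $j$ I choose $n(j)$ so large that $\| [\vr_{0,n(j)}, \vm_{0,n(j)}, E_{0,n(j)}] - [\vr_0, \vm_0, E_0] \|_{L^1} < 1/j$ and simultaneously $n(j) \geq N_k$ for all $k \leq j$, so that $\varphi^1, \dots, \varphi^j \in \DC(Q_{n(j)})$. Then, using that the generating sequence ($m \to \infty$) satisfies \eqref{DD2} for every $\varphi \in \DC(Q_{n(j)})$, I choose $m(n(j))$ so large that the $L^1$ distance of $[\vr_{0,n(j),m}, \vm_{0,n(j),m}, E_{0,n(j),m}]$ to $[\vr_{0,n(j)}, \vm_{0,n(j)}, E_{0,n(j)}]$ is below $1/j$ and the momentum-flux integral in \eqref{DD2} tested against each $\varphi^k$, $k \leq j$, is below $1/j$ in absolute value. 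The resulting diagonal sequence converges to $[\vr_0, \vm_0, E_0]$ in $L^1$ and satisfies \eqref{DD2} against every $\varphi^k$.

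Finally I would upgrade from the dense family $\{\varphi^k\}$ to all of $\DC(Q)$. The momentum fluxes $\frac{\vm \otimes \vm}{\vr} - \frac{1}{N} \frac{|\vm|^2}{\vr} \mathbb{I}$ are bounded in $L^1((0,T)\times\Omega)$ uniformly along the sequence, since their mass is controlled by $\int_0^T \intO{ \frac{|\vm|^2}{\vr} } \dt \leq 2 \int_0^T \intO{ E } \dt$, and the energies are bounded by admissibility together with the $L^1$ convergence of the initial energies. Hence for any $\varphi \in \DC(Q)$, choosing $\varphi^k$ close to $\varphi$ in $W^{\ell,2}_0(Q)$, the embedding makes $\Grad^2(\varphi - \varphi^k)$ uniformly small and the corresponding flux integrals differ by at most a constant times $\| \varphi - \varphi^k \|_{\ell,2}$, so \eqref{DD2} passes to every $\varphi \in \DC(Q)$. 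This shows $[\vr_0, \vm_0, E_0]$ is $\mathcal{Q}$-reachable via the partition $Q$. The main obstacle, and the only point where the argument departs from Lemma \ref{lemma42}, is coordinating the varying partitions: one must extract a single limiting partition $Q$ through Hausdorff compactness of $\mathcal{Q}$ and then invoke \eqref{DD1} to guarantee that each fixed test function eventually becomes an admissible test function for the approximating solutions.
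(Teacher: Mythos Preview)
Your proposal is correct and follows essentially the same approach as the paper's proof: extract a limit partition $Q \in \mathcal{Q}$ via the Hausdorff compactness assumption, use property \eqref{DD1} so that each fixed $\varphi^k \in \DC(Q)$ eventually lies in $\DC(Q_n)$, run the diagonal selection of Lemma \ref{lemma42}, and upgrade to all $\varphi \in \DC(Q)$ via the uniform energy bound and the Sobolev embedding. Your write-up is in fact more explicit than the paper's at the diagonal step (you spell out the requirement $n(j)\geq N_k$ for $k\leq j$) and at the energy-bound step; no gap.
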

	
	\begin{proof} 
		
		Let $[\vr_{0,n}, \vm_{0,n}, E_{0,n}]$ be a sequence of $\mathcal{Q}-$reachable data such that
		\[
		[\vr_{0,n}, \vm_{0,n}, E_{0,n}] \to 
		[\vr_0, \vm_0, E_0] \ \quad\mbox{in}\ L^1((0,T) \times \Omega; R^{N + 2}).
		\] 
		Let $[\vr_{0,n,m}, \vm_{0,n,m}, E_{0,n,m}]$ be the corresponding generating sequences,
		\[
		[\vr_{0,n,m}, \vm_{0,n,m}, E_{0,n,m}] \stackrel{(W[\mathcal{Q}])}{\rightarrow} [\vr_{0,n}, \vm_{0,n}, E_{0,n}]
		\ \quad\mbox{as}\ m \to \infty.
		\]
		In particular, the corresponding solutions $\vr_{m,n}$, $\vm_{m,n}$ satisfy
		\[
		\begin{split}
		\int_0^T \intO{ \left( \frac{\vm_{n,m} \otimes \vm_{n,m}} {\vr_{n,m}} - \frac{1}{N} \frac{ |\vm_{n,m}|^2}{\vr_{n,m}} \mathbb{I} \right)
			: \Grad^2 \varphi } \dt \to 0 \ \quad\mbox{as}\ m \to \infty\ \\\mbox{for any}\ \varphi \in \DC(Q_n)
		\end{split}
		\]
		for some $Q_n \in \mathcal{Q}$ and any fixed $n$. 
		
		As the partition set $\mathcal{Q}$ is closed, there exists a partition 
		$Q \in \mathcal{Q}$ such that 
		\begin{equation} \label{DD3}
		K \subset Q \ \mbox{a compact set} 
		\Rightarrow \mbox{there exist}\ n(K) \ \mbox{such that}\ K \subset Q_n \ \mbox{for all}\ n \geq n(K)
		\end{equation}
		at least for a suitable subsequence (not relabeled). 
		
		Let
		\[
		\left\{ \varphi^k \right\}_{k = 1}^\infty, \ \varphi \in \DC(Q)
		\] 
		be a countable dense subset of the Sobolev space
		\begin{equation} \label{DD4}
		W^{\ell,2}_0 (Q) 
		\equiv \Ov{ \DC (Q) }^{\| \cdot \|_{\ell,2} } \hookrightarrow C^2(\Ov{Q}) \quad\ \mbox{as soon as} \ 
		\ell > \frac{N + 5}{2}.
		\end{equation}
		Using property \eqref{DD3}, we can find a subsequence
		\[
		\begin{split}
		[\vr_{0,n(j), m(j)}, \vm_{0, n(j), m(j)}, E_{0, n(j), m(j)} ] 
		\to [\vr_0, \vm_0, E_0] \ \quad\mbox{as}\ j \to \infty \  \\\mbox{in}\ L^1((0,T) \times \Omega; R^{N+2})
		\end{split}
		\]
		satisfying
		\begin{equation} \label{DD5}
		\begin{split}
		\int_0^T \intO{ \left( \frac{\vm_{n(j),m(j)} \otimes \vm_{n(j),m(j)}} {\vr_{n(j),m(j)}} - \frac{1}{N} 
			\frac{ |\vm_{n(j),m(j)}|^2}{\vr_{n(j),m(j)}} \mathbb{I} \right)
			: \Grad^2 \varphi^k } \dt \to 0 \ \\\mbox{as}\ j \to \infty\ \mbox{for any}\ k.
		\end{split}
		\end{equation}
		
		By virtue of \eqref{DD4}, any $\nabla^2_x \varphi$ can be uniformly approximated by $\nabla^2_x \varphi^k$, and, 
		as the energies $E_{n(j), m(j)}$ are bounded, we conclude that \eqref{DD5} holds for any $\varphi \in \DC(Q)$. 
		We may infer that 
		\[
		[\vr_{0,n(j), m(j)}, \vm_{0, n(j), m(j)}, E_{0, n(j), m(j)} ] 
		\stackrel{(W[\mathcal{Q}])}{\rightarrow}  [\vr_0, \vm_0, E_0] \ \quad\mbox{as}\ j \to \infty;
		\]
		whence $[\vr_0, \vm_0, E_0]$ is $\mathcal{Q}-$reachable.

	\end{proof}
	
	Finally, we observe that any regular initial data $[\vr_0, \vm_0, E_0]$ satisfy \eqref{DMV5}. To see this, we first observe, 
	similarly to Section \ref{WS}, that 
	\[
	\int_0^T \intO{ \left[ \left( \frac{\vc{m} \otimes \vc{m} }{\vr} \right) : \Grad^2 \varphi - \frac{1}{N} \frac{ |\vc{m}|^2 }{\vr} \Del \varphi \right]
	} \dt = 0 \ \quad\mbox{for any} \ \varphi \in \DC(Q).
	\]
	As $\vr$, $\vm$ are smooth, we get 
	\[
	\Div \Div \left( \frac{\vc{m} \otimes \vc{m} }{\vr} \right) - \frac{1}{N} \Del \frac{ |\vc{m}|^2 }{\vr} = 0
	\ \quad\mbox{in}\ Q.
	\]
	However, as $\Ov{Q} = [0,T] \times \Ov{\Omega}$ and the second derivatives of $\rho$, $\vm$ are continuous, this implies
	\[ 
	\Div \Div \left( \frac{\vc{m} \otimes \vc{m} }{\vr} \right) - \frac{1}{N} \Del \frac{ |\vc{m}|^2 }{\vr} = 0
	\ \quad\mbox{in}\ (0,T) \times \Omega.
	\]
	In particular, we obtain \eqref{DMV5} for the initial data.

	We have shown the following extension of Theorem \ref{TE1}.
	
	\begin{theorem} \label{TDD1}
		Let $s_0 \in R$ be given and $\Omega \subset R^N$, $N=2,3$ be a bounded smooth domain.
		Let $\mathcal{Q}$ be a closed partition set in $(0,T) \times \Omega$. \\
		Then the complement of the set of $\mathcal{Q}-$reachable  data {is} an open dense set in
		$\texttt{L}^1_{+,s_0} (\Omega; R^{N + 2})$.
	\end{theorem}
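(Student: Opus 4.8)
The plan is to follow the blueprint already established for Theorem \ref{TE1}, replacing the global reachability structure by its $\mathcal{Q}$-localized counterpart wherever the original argument invokes the test-function condition \eqref{E10}. The proof has three ingredients: that the complement is open (closedness of the $\mathcal{Q}$-reachable set), that the $\mathcal{Q}$-reachable set is nowhere dense, and that a dense class of regular data fails to be $\mathcal{Q}$-reachable. The first ingredient is supplied verbatim by Lemma \ref{LDD1}, so I would cite it to conclude that the complement of the set of $\mathcal{Q}$-reachable data is open.

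For nowhere density, I would argue exactly as in the four-step programme of Section \ref{WS} but using the $\mathcal{Q}$-localized convergence. Suppose $[\vr_0, \vm_0, E_0]$ is $\mathcal{Q}$-reachable and gives rise to a local-in-time regular $C^1$-solution on $[0, T_{\rm max})$ with $0 < T < T_{\rm max}$. The generating sequence produces a DMV solution by Section \ref{SSDMV}, the weak--strong uniqueness principle forces \eqref{DMV4} on $(0,T) \times \Omega$ and hence the strong convergence \eqref{E12}. The only departure from the earlier argument is that the test-function annihilation now holds only over the partition $Q \in \mathcal{Q}$ appearing in \eqref{DD2}, so after passing to the limit we obtain
\[
\int_0^T \intO{ \left[ \left( \frac{\vc{m} \otimes \vc{m} }{\vr} \right) : \Grad^2 \varphi - \frac{1}{N} \frac{ |\vc{m}|^2 }{\vr} \Del \varphi \right] } \dt = 0 \quad\ \mbox{for any} \ \varphi \in \DC(Q),
\]
which by smoothness of the limit yields the pointwise identity $\Div \Div ( \vc{m} \otimes \vc{m} / \vr ) - \frac{1}{N} \Del ( |\vc{m}|^2 / \vr ) = 0$ in $Q$. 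This is precisely the computation carried out just before the statement of Theorem \ref{TDD1}.

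The key step, and the place where the partition structure must be used essentially, is the propagation of this identity from $Q$ to all of $(0,T) \times \Omega$. Here I would invoke the defining property $\Ov{Q} = [0,T] \times \Ov{\Omega}$ of a partition together with continuity of the second derivatives of the smooth fields $\vr, \vm$: an identity that holds on a dense open set and whose left-hand side is continuous must hold on the closure, hence on the whole cylinder. Taking the trace at $t = 0$ then delivers \eqref{DMV5}, and therefore $\vc{w}_0 = \vc{m}_0 / \sqrt{\vr_0} \in \mathcal{S}$. By Lemma \ref{lemma41} the set $\mathcal{S}$ is nowhere dense in $L^2(\Omega; R^N)$, which confines the $\mathcal{Q}$-reachable regular data to a nowhere dense set, exactly as in the proof of Theorem \ref{TE1}.

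Finally, for density of the complement I would reuse the Schochet existence result quoted in Section \ref{FI}: the smooth data \eqref{AA3} with $\vm_0 = 0$ and constant $\vr_0, \vt_0$ near $\partial \Omega$ generate local regular solutions and are dense in $\texttt{L}^1_{+, s_0}(\Omega; R^{N+2})$. Since each such datum lies outside the $\mathcal{Q}$-reachable set by the nowhere-density argument above, its complement is dense. Combining openness (Lemma \ref{LDD1}) with density completes the proof. I expect the only genuine subtlety to be the continuity-and-closure argument that upgrades the identity from $Q$ to the full cylinder; everything else is a routine transcription of the $Q = (0,T) \times \Omega$ case, so I would keep that transcription brief and concentrate the exposition on the role of $\Ov{Q} = [0,T] \times \Ov{\Omega}$.
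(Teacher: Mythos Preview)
Your outline matches the paper's argument step for step: Lemma \ref{LDD1} for openness, the DMV/weak--strong machinery of Sections \ref{DMV}--\ref{WS} to obtain the integral identity on $Q$, the continuity-and-closure upgrade from $Q$ to $(0,T) \times \Omega$ via $\Ov{Q} = [0,T] \times \Ov{\Omega}$, then Lemma \ref{lemma41} and Schochet's density result.

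The only slip is the sentence ``each such datum lies outside the $\mathcal{Q}$-reachable set'': this is false as stated, since data in \eqref{AA3} with $\vm_0 \equiv 0$ have $\vc{w}_0 = 0 \in \mathcal{S}$, and the implication you established runs only one way ($\mathcal{Q}$-reachable regular $\Rightarrow \vc{w}_0 \in \mathcal{S}$). What you actually need---and what the paper's phrasing ``in view of Lemma \ref{lemma41}, we have only to show that smooth regular data are dense'' encodes---is that any smooth regular datum can be further perturbed within the class \eqref{AA3} so that $\vc{w}_0 = \vm_0/\sqrt{\vr_0}$ leaves the nowhere-dense set $\mathcal{S}$; the perturbed datum is then provably not $\mathcal{Q}$-reachable. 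With this correction your argument is complete and coincides with the paper's.
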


	As already pointed out, Theorem \ref{TDD1} accommodates the limits of the wild initial data constructed via the Riemann problem with $M$ fans 
	in the sense of \cite{ChiDelKre15}.

	\begin{appendix}

		\section{Wild solutions}
		\label{A}

		The existence of weak solutions for the Euler system in higher space dimensions is an outstanding open problem. Nevertheless, the recently developed adaptation of the convex integration technique to the incompressible Euler system provided certain results also for
		the compressible case. We report the following result proved in \cite{FKKM17}.
		
		\begin{theorem} \label{TS1}
			
			Let the initial data $\vr_0$ and $\vt_0$ be piecewise constant strictly positive functions defined on $\Omega$. And let $T > 0$ be given.
			
			Then there exists a constant $\Lambda_0 > 0$ such that for any $\Lambda \geq \Lambda_0$ there is $\vu_0 \in L^\infty(\Omega; R^N)$
			such that the problem \eqref{E1}, \eqref{E5}, \eqref{E8} admits infinitely many weak solutions in $(0,T) \times \Omega$, with the initial data
			\[
			\vr_0, \quad\ \vc{m}_0 = \vr_0 \vu_0, \quad\ E_0 = \frac{1}{2} \vr_0 |\vu_0|^2 + \vr_0 e(\vr_0, \vt_0).
			\]
			
			In addition, these solutions enjoy the following properties:
			\begin{equation} \label{S1}
			- \frac{1}{N} \frac{|\vm|^2}{\vr} = - \Lambda + p \ \quad\mbox{for a.a.}\ (t,x)
			\ \mbox{including}\ t = 0, 
			\end{equation}
			\begin{equation} \label{S2}
			\Div \vc{m} = 0,
			\end{equation}
			\begin{equation} \label{S3}
			\vr(t,x) = \vr_0 (x) \ \quad\mbox{for all}\ t \geq 0,
			\end{equation}
			\begin{equation} \label{S4}
			S(t,x) = S_0(x) \quad\ \mbox{for all}\ t \geq 0,
			\end{equation}
			and the renormalized entropy inequality \eqref{RenEntr} holds as equality.
			
		\end{theorem}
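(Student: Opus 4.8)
The plan is to prove Theorem~\ref{TS1} by adapting the convex integration scheme of De~Lellis and Sz\'ekelyhidi \cite{DelSze10} to the compressible setting with a \emph{frozen} density and temperature. The starting observation is that, since $\vr_0,\vt_0$ are to remain constant in time (properties \eqref{S3}, \eqref{S4}), the pressure $p = \vr_0 \vt_0$ is a fixed piecewise constant field, and the continuity equation $\partial_t \vr + \Div\vm = 0$ collapses to the incompressibility constraint \eqref{S2}. Moreover, if we \emph{impose} the kinetic energy relation \eqref{S1}, i.e. $\frac{1}{N}\frac{|\vm|^2}{\vr_0} + p \equiv \Lambda$, then the scalar part of the momentum flux together with the pressure is the \emph{spatially constant} function $\Lambda$, whose gradient vanishes. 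Consequently the momentum equation reduces to the purely trace-free evolution
\begin{equation*}
\partial_t \vm + \Div\left(\frac{\vm\otimes\vm}{\vr_0} - \frac{1}{N}\frac{|\vm|^2}{\vr_0}\mathbb{I}\right) = 0,\qquad \Div\vm = 0,
\end{equation*}
which is exactly the incompressible Euler system weighted by the piecewise constant coefficient $\vr_0$, supplemented by the pointwise energy prescription $\frac{1}{2}\frac{|\vm|^2}{\vr_0} = \frac{N}{2}(\Lambda - p)$.

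Next I would recast this as a differential inclusion in the spirit of Tartar. Introducing the symmetric trace-free matrix field $\mathbb{U}$ as an independent unknown, the linear system $\partial_t\vm + \Div\mathbb{U} = 0$, $\Div\vm = 0$ carries the conservation-law (wave-cone) structure, while the nonlinear pointwise target set is
\begin{equation*}
K = \left\{(\vm,\mathbb{U}) : \mathbb{U} = \frac{\vm\otimes\vm}{\vr_0} - \frac{1}{N}\frac{|\vm|^2}{\vr_0}\mathbb{I}, \ \frac{1}{2}\frac{|\vm|^2}{\vr_0} = \frac{N}{2}(\Lambda - p)\right\}.
\end{equation*}
A \emph{subsolution} is a state satisfying the linear system together with the strict spectral inequality $\lambda_{\max}\!\left(\frac{\vm\otimes\vm}{\vr_0} - \mathbb{U}\right) < \Lambda - p$, which cuts out the interior of the $\Lambda$-convex hull of $K$. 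The crucial point, and the reason $\Lambda_0$ appears, is that the trivial state $\vm \equiv 0$, $\mathbb{U} \equiv 0$ is an admissible strict subsolution precisely when $\Lambda - p > 0$ everywhere, i.e. for all $\Lambda \geq \Lambda_0 := \max_{\Ov{\Omega}} p + 1$. Since $\vr_0, \vt_0$ are piecewise constant, I would set up the scheme on each cell where they are constant, the jump interfaces being loci of measure zero.

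I would then run the standard convex integration machinery. Equipping the space of subsolutions (with uniform $L^\infty$ bounds and the weak-$(*)$ topology, which renders it a complete metric space) one invokes the oscillation/perturbation lemma: along a suitable localized plane wave in the wave cone one adds a correction that strictly increases $\int \frac{1}{2}\frac{|\vm|^2}{\vr_0}$ towards the prescribed energy, by an amount controlled from below by the spectral gap $\Lambda - p - \lambda_{\max}$. A Baire category argument then shows that the subset of subsolutions actually saturating the energy constraint a.e., hence lying in $K$ and solving the momentum equation exactly, is residual; in particular it is nonempty and, by perturbing the construction, infinite. These solutions inherit \eqref{S1}, \eqref{S2} by construction and \eqref{S3}, \eqref{S4} because $\vr, \vt$, and thus $S$, are held constant in time. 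The entropy transport then holds with equality, since $\partial_t(\vr_0 s) = 0$ and $\Div(s\vm) = 0$ for the piecewise constant $s$ and the divergence-free $\vm$; the total energy balance \eqref{DMV1} follows likewise, as both the kinetic and internal energies are time-independent and the energy flux reduces to a gradient-free combination.

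The main obstacle is the perturbation step. One must produce genuinely oscillatory localized corrections that live in the wave cone of the weighted system and simultaneously respect the spatially varying energy density $\frac{N}{2}(\Lambda - p)$ together with the interfaces where $\vr_0$ jumps. This amounts to verifying that the $\Lambda$-convex hull of $K$ has nonempty interior containing the chosen subsolution \emph{uniformly in $x$}, with a spectral gap bounded below, so that the energy increment in the oscillation lemma does not degenerate near the density interfaces, and that the resulting field remains an admissible weak solution across those interfaces. Once this quantitative hull computation is secured, the remaining Baire-category iteration is the by now routine argument of \cite{DelSze10}.
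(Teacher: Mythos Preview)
The paper does not actually prove Theorem~\ref{TS1}: it is stated in the Appendix with the explicit remark ``We report the following result proved in \cite{FKKM17}'', so there is no in-paper proof to compare against. Your outline is essentially the strategy carried out in \cite{FKKM17}: freeze $\vr=\vr_0$, $\vt=\vt_0$, impose the algebraic constraint \eqref{S1} so that the isotropic part of the momentum flux plus the pressure becomes the constant $\Lambda$, reduce the momentum equation to a trace-free incompressible-type inclusion, take the trivial subsolution $(\vm,\mathbb{U})=(0,0)$ (admissible once $\Lambda>\max p$), and run the De~Lellis--Sz\'ekelyhidi oscillation/Baire scheme. In that sense your proposal is on target.

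One point deserves more care than you give it. You assert that the entropy and energy balances hold as equalities because $s$ and $(E+p)/\vr$ are piecewise constant and $\Div\vm=0$. But $\Div\vm=0$ in $\mathcal{D}'$ only guarantees continuity of the \emph{normal} component of $\vm$ across the interfaces of $\vr_0,\vt_0$; it does not by itself force $\Div(s\vm)=0$ or $\Div\big((E+p)\vm/\vr\big)=0$ distributionally when $s$ or $(E+p)/\vr$ jump. The weak formulation picks up interface terms of the form $[s]\,\vm\cdot\nu$ and $[(E+p)/\vr]\,\vm\cdot\nu$, which vanish only if either the jumps are zero or $\vm\cdot\nu=0$ along the discontinuity set. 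In \cite{FKKM17} this is handled within the construction (the oscillatory corrections are built cell-by-cell with compatibility at interfaces), and you rightly flag the interfaces as ``the main obstacle'', but your one-line justification of the entropy/energy equalities glosses over exactly this issue. Making that step rigorous is where the real work beyond the standard incompressible argument lies.
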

		
		Note that here \eqref{NE7} is satisfied with $\Phi = 0$.

	\end{appendix}

	\section*{Acknowledgements}
		The research of E. F.~leading to these results has received funding from the
		Czech Sciences Foundation (GA\v CR), Grant Agreement
		18--05974S. The Institute of Mathematics of the Academy of Sciences of
		the Czech Republic is supported by RVO:67985840.

	% BibTeX users please use one of
	%\bibliographystyle{spbasic}      % basic style, author-year citations
	%\bibliographystyle{spmpsci}      % mathematics and physical sciences
	%\bibliographystyle{spphys}       % APS-like style for physics
	%\bibliography{mybib.bib}   % name your BibTeX data base

\begin{thebibliography}{}
		%
		% and use \bibitem to create references. Consult the Instructions
		% for authors for reference list style.
		%
		\bibitem{BKKMM18}
		Al~Baba, H., Klingenberg, C., Kreml, O., M\'acha, V., Markfelder, S.: Nonuniqueness of admissible weak solutions to the {R}iemann problem for the full {E}uler system in two dimensions. \textit{SIAM J. Math. Anal.} \textbf{52}(2), 1729--1760, 2020
		
		\bibitem{BreFei17}
		B{\v r}ezina, J., Feireisl, E.: Measure-valued solutions to the complete {E}uler system. \textit{J. Math. Soc. Japan} \textbf{70}(4), 1227--1245, 2018
		
		\bibitem{CheFri02}
		Chen, G.-Q., Frid, H., Li, Y.: Uniqueness and stability of {R}iemann solutions with large oscillation in gas dynamics. \textit{Comm. Math. Phys.} \textbf{228}(2), 201--217, 2002
		
		\bibitem{Chiodaroli14}
		Chiodaroli, E.: A counterexample to well-posedness of entropy solutions to the compressible {E}uler system. \textit{J. Hyperbolic Differ. Equ.} \textbf{11}(3), 493--519, 2014
		
		\bibitem{ChiDelKre15}
		Chiodaroli, E., De~Lellis, C., Kreml, O.: Global ill-posedness of the isentropic system of gas dynamics. \textit{Comm. Pure Appl. Math.} \textbf{68}(7), 1157--1190, 2015
		
		\bibitem{ChiKre14}
		Chiodaroli, E., Kreml, O.: On the energy dissipation rate of solutions to the compressible isentropic {E}uler system. \textit{Arch. Ration. Mech. Anal.} \textbf{214}(3), 1019--1049, 2014
		
		\bibitem{DelSze10}
		De~Lellis, C., Sz{\'e}kelyhidi~Jr., L.: On admissibility criteria for weak solutions of the {E}uler equations. \textit{Arch. Ration. Mech. Anal.} \textbf{195}(1), 225--260, 2010
		
		\bibitem{Feireisl16}
		Feireisl, E.: \textit{Weak solutions to problems involving inviscid fluids.} In: ``Mathematical Fluid Dynamics, Present and Future'', pp. 377--399, Springer Proceedings in Mathematics and Statistics \textbf{183}, Springer-Verlag, Tokyo, 2016
		
		\bibitem{FKKM17}
		Feireisl, E., Klingenberg, C., Kreml, O., Markfelder, S.: On oscillatory solutions to the complete Euler system. \textit{J. Differential Equations} \textbf{269}(2), 1521--1543, 2020
		
		\bibitem{KliMar18}
		Klingenberg, C., Markfelder, S.: The {R}iemann problem for the multidimensional isentropic system of gas dynamics is ill-posed if it contains a shock. \textit{Arch. Ration. Mech. Anal.} \textbf{227}(3), 967--994, 2018
		
		\bibitem{Pironneau84}
		Pironneau, O.: \textit{Optimal shape design for elliptic systems.} Springer series in {C}omputational {P}hysics, Springer-Verlag, New York, 1984
		
		\bibitem{Schochet86}
		Schochet, S.: The compressible {E}uler equations in a bounded domain: existence of solutions and the incompressible limit. \textit{Comm. Math. Phys.} \textbf{104}(1), 49--75, 1986
		
		\bibitem{SzeWie12}
		Sz{\'e}kelyhidi~Jr., L., Wiedemann, E.: Young measures generated by ideal incompressible fluid flows. \textit{Arch. Ration. Mech. Anal.} \textbf{206}(1), 333--366, 2012
	\end{thebibliography}
	
	% Non-BibTeX users please use

\end{document}